\definecolor{Azure}{rgb}{0.2,0.8,1}
\declaretheorem[name=Theorem]{theorem}
\numberwithin{theorem}{section}
\declaretheorem[sibling=theorem, name=Lemma]{lemma}
\declaretheorem[sibling=theorem, name=Proposition]{prop}
\declaretheorem[sibling=theorem, name=Definition]{defi}
\declaretheorem[sibling=theorem, name=Corollary]{cor}
\declaretheorem[sibling=theorem, name=Remark]{remark}
\declaretheorem[sibling=theorem, name=Notation]{notation}
\declaretheorem[sibling=theorem, name=Question]{question}
\declaretheorem[sibling=theorem, name=Reminder]{reminder}
\declaretheorem[sibling=theorem, name=Conjecture]{conjecture}
\newcommand{\Q}{\mathbb{Q}}
\newcommand{\R}{\mathbb{R}}
\newcommand{\C}{\mathbb{C}}
\newcommand{\N}{\mathbb{N}}
\newcommand\Z{\mathbb Z}
\newcommand{\E}{\mathbb E}
\DeclareMathOperator{\im}{im}
\DeclareMathOperator{\subjto}{s. t.}
\DeclareMathOperator{\GL}{GL}
\DeclareMathOperator{\GM}{GM} 
\newcommand{\ot}{\otimes}
\newcommand{\duonomial}[2]{\widehat{\widehat{\binom{#1}{#2}}}}
\newcommand{\lf}{\mathcal{F}_1}
\newcommand{\quadf}{\mathcal{F}_2}
\subjclass{primary: 14N07, secondary: 15A69, 62H12.}
\keywords{Secant varieties, Gaussian mixtures, Waring decomposition}
\begin{document}

\begin{abstract} 
	We resolve most cases of identifiability from sixth-order moments for Gaussian mixtures on spaces of large dimensions. Our results imply that the parameters of a generic mixture of $ m\leq\mathcal{O}(n^4) $ Gaussians on $ \R^n $ can be uniquely recovered from the mixture moments of degree $ 6 $. The constant hidden in the $ \mathcal{O} $-notation is optimal and equals the one in the upper bound from counting parameters. We give an argument that degree-$ 4 $ moments never suffice in any nontrivial case, and we conduct some numerical experiments indicating that degree $ 5 $ is minimal for identifiability. 
\end{abstract}

\title[Gaussian Mixture Moment Identifiability]{Gaussian Mixture Identifiability\\ from degree 6 moments}
\date{\today}
\author[Blomenhofer]{Alexander Taveira Blomenhofer}
\thanks{Centrum Wiskunde \& Informatica, Amsterdam}
\address{CWI, Networks \& Optimization, Amsterdam, Science Park 123, NL-1098 XG.}
\email{atb@cwi.nl}
\urladdr{cwi.nl/en/people/filipe-alexander-taveira-blomenhofer}
\maketitle

\section{Introduction}\label{sec:introduction}

Gaussian mixtures are a fundamental distributional model for machine learning and Data Science applications. They have a tremendous range of applications, encompassing Machine learning primitives such as clustering (\cite{Reynolds_Rose_1995a},\cite{Permuter_Francos_Jermyn_2003a},\cite[Notebook 5.12]{Vanderplas_2016python},\cite{mixture_models_2020}) and subspace learning (\cite{Hong_Malinas_Fessler_Balzano_2018},\cite{Lipor_Balzano_2017},\cite{Wang_Wipf_Ling_Chen_Wassell_2015},\cite{Hegde_Indyk_Schmidt_2016a}). 
Historically, Gaussian mixtures emerged as an object of study due to Pearson \cite{Pearson_1900}, who tried to gather evidence for the theory of evolution by separating distinct crab species. In a time where DNA tests were not yet accessible, he inferred, based on a Gaussian mixture model, that his sample of crabs likely consisted of more than one species.  

Ca. 120 years later, there is a vast ocean of research on the topic of Gaussian mixtures (e.g.,  \cite{Dasgupta_1999a},\cite{Sanjeev_Kannan_2001a},\cite{Dasgupta_Schulman_2007a},\cite{Moitra_Valiant_2010},\cite{Kalai_Moitra_Valiant_2010},\cite{Hsu_Kakade_2013a},\cite{Anderson_Belkin_Goyal_Rademacher_Voss_2014a},\cite{Regev_Vijayaraghavan_2017a},\cite{Liu_Moitra_2021},\cite{Amendola_Faugere_Sturmfels_2016},\cite{Amendola_Ranestad_Sturmfels_2018},\cite{Curto_DiDio_2022},\cite{DiDio_2023}), fueled by the craze on machine learning and big data. Many fundamental aspects of Gaussian mixtures are still very poorly understood, including identifiability and efficient parameter recovery. 

A mixture of $m$ Gaussians is sampled as follows: From a box containing the $m$ Gaussians $ \mathcal{N}(\mu_1, \Sigma_1),\ldots,\mathcal{N}(\mu_m, \Sigma_m) $, draw one of them at random. Then, sample the Gaussian that was drawn from the box. The probability to draw the $ i $-th Gaussian is called the $ i $-th \emph{mixing weight} $ \lambda_i $. 
We denote such a Gaussian mixture distribution as $ \lambda_1\mathcal{N}(\mu_1, \Sigma_1) \oplus \ldots \oplus \lambda_m \mathcal{N}(\mu_m, \Sigma_m) $. In order to learn a Gaussian mixture model, one needs to obtain the mean vectors and the covariance matrices of every Gaussian which contributes to the mixture, and usually also the mixing weights. The number $ m $ is called the \emph{rank} of the mixture representation. 

However, learning a Gaussian mixture model is not necessarily a well-posed task, since Gaussian mixture models are not \emph{identifiable} in the strict sense: A statistical model is called {identifiable}, if two different choices of model parameters produce distinct model distributions. Trivial reasons prevent Gaussian mixture models from being identifiable. E.g., it is possible to permute the parameters, to choose two Gaussians with identical parameters (vs. adjusting the weights), to add Gaussians with mixing weight equal to zero, etc.

Ideally, we would like a result which guarantees that in ``most'' cases, it is possible to uniquely recover the parameters. Additionally, recovery should be possible from a finite amount of information about the mixture distribution. The most natural candidate for this finite amount of information would be a set of samples, drawn from the mixture distribution. However, there is a second-most natural\footnote{Informally, moments provided a coarsened perspective: From sufficiently many samples, one may clearly compute the empirical moments, which agree with the actual moments up to small error, with high probability. By directly looking at the moments, we eliminate the probabilistic aspect. As it turns out, this leads to very clean and structured results.} candidate. 
Distributions are elements of an infinite-dimensional vector space, but a canonical approach is to describe them via their \emph{moments} of degree at most $ d\in \N $. These are elements of a finite-dimensional vector space. The focus of this paper is the following question. 

\begin{question}\label{question:identifiability}
	For which ranks $ m $, dimensions $ n $ and degrees $ d $ can we recover the parameters of a general rank-$ m  $ Gaussian mixture on $ \R^n $ from its moments of degree $ d $? 
\end{question}

The term ``general'' here means generic in the sense of the Zariski topology. The results we give will only depend on the values $ m, n $ and $ d $, and not on any other properties of the mixture. In other words, for ``almost all'' rank-$ m $ degree-$ d $ Gaussian mixture moments on $ \R^n $, the answer is the same. Note that \Cref{question:identifiability} is not about computationally efficient recovery, but about information theoretic recoverability. In other words, we want to know when the parameters are uniquely determined by the moments. 

\subsection{Overview of contributions}
In this paper, we resolve most cases of degree-$ 6 $ Gaussian moment identifiability. Precisely, we show that the parameters of a generic, uniformly weighted mixture $ Y = \frac{1}{m}\mathcal{N}(\mu_1, \Sigma_1) \oplus \ldots \oplus \frac{1}{m} \mathcal{N}(\mu_m, \Sigma_m) $ of $ m $ Gaussians are uniquely determined by the moments of $ Y $ of degree $ 6 $, if $ m \le m(n) $ is at most some threshold, depending on $ n $, with asymptotic growth equal to $ m(n)=\Theta(n^4) $. 
For the same threshold $ m\leq m(n) $, we also show that the parameters of a Gaussian mixture $ Y =\lambda_1 \mathcal{N}(\mu_1, \Sigma_1) \oplus \ldots \oplus \lambda_m \mathcal{N}(\mu_m, \Sigma_m) $ with general parameters and general mixing weights are uniquely determined by the moments of $ Y $ of degree $ 6 $ and $ 4 $. The precise statement can be found in \Cref{thm:gmm-identifiable-deg6}. Note that for the weighted case, moments of two distinct degrees are necessary for any such result.

We complement our theoretical results with some computations of the secant dimensions of $ \GM_{d}(\C^n) $ in small numbers of variables $ n\in \{1,\ldots,19\} $, and various degrees $ d\in \{4,5,6\} $.

\subsubsection{Techniques}

Moment identifiability for Gaussian mixtures has been a long standing open problem, with progress over the last years mostly limited to the univariate case and often only proving that there exist less than infinitely many solutions (so called finite-to-one identifiability). Very recent developments allowed us to advance beyond that. Our identifiability proof uses three main ingredients:  
First, the new result of Massarenti and Mella \cite{Massarenti_Mella_2022}, see \Cref{thm:massarenti-mella}, building up on work of Casarotti and Mella \cite{Casarotti_Mella_2022}, which showed that $ m $-identifiability can, under mild conditions, be obtained from $ m+1 $ (tangential) nondefectivity. Second, a result of Nenashev \cite{nenashev2017note}, which we state in \Cref{thm:nenashev}: It resolved a large number of cases of Fröberg's 1985 conjecture on the Hilbert series of sets of forms \cite{froberg1985inequality}. And third, a carefully constructed degeneration argument, which allows to reduce secant nondefectivity to a combinatorial puzzle. 

The hidden fourth ingredient is a change in notation. 
Compared to some previous work on the moments of Gaussian mixtures, we deviate from the convention to write the $ 2d $-th moments of a Gaussian mixture as mode-symmetrization of a tensor in $ S^{2}(\C^n)^{\ot d} $ (cf. \cite{Ge_Huang_Kakade_2015},\cite{Bafna_Hsieh_Kothari_Xu_2022},\cite{Pereira_Kileel_Kolda_2022}). Instead, we interpret the moments of degree $ 6 $ of an $ n $-variate Gaussian distribution $ \mathcal{N}(\mu, \Sigma) $, with mean $ \mu\in \R^n $ and positive definite covariance matrix $ \Sigma \in \R^{n\times n} $, as the coefficients of the sextic form
\begin{align}
	s_{6}(\ell, q) = \ell^6 + 15q\ell^4 + 45q^2\ell^2 + 15q^3 \in \R[X]_{6},  
\end{align}
where $ X = (X_1,\ldots,X_n) $, $ \ell=\mu^{T}X $ and $ q = X^{T}\Sigma X $. Such a \emph{moment form} $ s_d $ does exist in each degree, and up to scaling, it is the $ d $-homogeneous part of the \emph{moment generating series}, which for a Gaussian $ \mathcal{N}(\mu, \Sigma) $ equals 
\begin{align}
	\exp\left(\ell + \frac{q}{2}\right) = \sum_{d = 0}^\infty \frac{1}{d!} \left(\ell + \frac{q}{2}\right)^{d}.
\end{align}
While the choice of notation is usually of minor importance, we wish to stress that all our results strongly rely on the fact that we can express $ s_d(\ell, q) $ as a binary polynomial in $ \ell $ and $ q $. For instance, we make use of factorizations. \Cref{table:gaussian-moments} shows the $ s_d(\ell, q) $ for various values of $ d $.

\begin{table}[h]
	\centering
	\csvreader[tabular=cc, filter={\value{csvrow}<8}, 
	table head=\hline $ d $ & Gaussian moment form of degree $ d $\\ \hline, late after line=\\]
	{data/gaussian-moments-derivatives.csv}{"degree"=\degree,"moments"=\moments}
	{\degree & \moments}
	\caption{Moment forms of a Gaussian distribution $ \mathcal{N}(\mu, \Sigma) $ in degree $ d\in \{1,\ldots, 8\} $, with $ \ell = \mu^{T}X $ and $ q = X^{T}\Sigma X $. All expressions were normalized such that the coefficient of $ \ell^{d} $ is $ 1 $. }
	\label{table:gaussian-moments}
\end{table}

\subsection{Related work}

Classically, a lot of focus had been directed towards the univariate case. Pearson's original work \cite{Pearson_1900} examined moments of a univariate rank-$ 2 $ Gaussian mixture of degree at most $ 5 $. More recent results on the univariate case are, e.g., due to Amendola, Sturmfels et al. (\cite{Amendola_Faugere_Sturmfels_2016},\cite{Amendola_Ranestad_Sturmfels_2018}), who examined univariate identifiability for higher ranks. Massarenti and Mella \cite{Massarenti_Mella_2022} recently proved identifiability for univariate Gaussian distributions, as a corollary of their new, very general identifiability theorem. 

The special case of multivariate Gaussians with \emph{identical} covariance matrices is also very well-understood: These can essentially be reduced to $ 1 $-Waring decompositions, sometimes also called symmetric tensor decompositions, see the author's doctoral thesis \cite[p. 42, p. 47]{Taveira_Blomenhofer_Thesis}. For $ 1 $-Waring decompositions, the question of identifiability is now completely understood, see \Cref{sec:ident-in-geometry}, and there are some efficient algorithms for the low-rank case, see \cite{Leurgans_Ross_Abel_1993} and \cite{Anandkumar_Ge_Hsu_Kakade_Telgarsky_2012}. 

\medskip
With the advent of big data, there emerged interest to understand Gaussian mixtures in high dimensions, preferably with full freedom on the covariance matrices. This direction was pioneered by some algorithmic recovery results: Moitra and Valiant \cite{Moitra_Valiant_2010} gave a polynomial-time algorithm to estimate the parameters of a constant-rank mixture in large dimensions. The constant-rank regime has been further improved since then, for instance under the aspect of robustness in \cite{Bakshi_Diakonikolas_Jia_Kane_Kothari_Vempala_2022} and in \cite{Liu_Moitra_2021}. Recently, there have also been attempts to give homotopy continuation based algorithms for the constant rank regime \cite{Lindberg_Amendola_Rodriguez_2023}, which seem to run well in practice, if the rank is $ 2 $. Unfortunately, the computational complexity of the homotopy continuation method is not understood. 
A major leap was the 2015 result of Ge, Huang and Kakade \cite{Ge_Huang_Kakade_2015}, who gave an algorithm to handle mixtures of rank at most $ m=\mathcal{O}(\sqrt{n}) $ on $ \R^n $. This was the first breakout from the constant-rank regime. 

The special case of \emph{centered} Gaussians, where all Gaussians have identical means, was subsequently studied from the algorithmic perspective in \cite{Garg_Kayal_Saha_2020} and \cite{Bafna_Hsieh_Kothari_Xu_2022}, and also by the author in \cite{Taveira_Blomenhofer_UPOF_2023} and \cite{Blomenhofer_Casarotti_Michalek_Oneto_2022}, the latter in joint work with Casarotti, Micha{\l}ek and Oneto. 

This paper gives, to the best of our knowledge, the first identifiability result that, asymptotically in a large number of variables $ n $, matches the rank upper bound 
\begin{align}
	m\le  \dfrac{\binom{n+5}{6}}{\binom{n+1}{2} + n} =\frac{1}{360} n^4 + \frac{1}{30} n^3 + \frac{49}{360} n^2 + \frac{13}{60} n + \frac{1}{9}
\end{align} 
from counting parameters. Our result also matches this parameter counting bound up to the correct constant in the leading $ n^4 $-term. Previous results were either limited to very low rank $ m = \Theta(\sqrt{n}) $, due to algorithmic constraints, see \cite{Ge_Huang_Kakade_2015}, or limited to the univariate case, see \cite{Amendola_Faugere_Sturmfels_2016} and \cite{Amendola_Ranestad_Sturmfels_2018}.

\subsubsection{Acknowledgements}
I wish to thank Monique Laurent for a lot of very valuable feedback. I also wish to thank my former doctoral advisor Mateusz Micha{\l}ek for the guidance I received during my time in Konstanz, and Alex Casarotti, for some very helpful discussions. Part of this work was completed while the author was supported by the Dutch Scientific Council (NWO) grant OCENW.GROOT.2019.015 (OPTIMAL).

\subsubsection{Disclosure}
The proof of skewness of tangent spaces in the case of degree $ 6 $ and $ 7 $ was part of my doctoral thesis, see  \cite[Lemma 3.3.4 and Theorem 3.3.8]{Taveira_Blomenhofer_Thesis}.

\section{Preliminaries}\label{sec:prelims}

\paragraph{Notation}

Let us write $ \N = \{1,2,3,\ldots \} $ for the set of natural numbers, with $ \N_0 = \N \cup \{0\} $. We work with polynomials in real or complex coefficients. For $ K\in \{\R, \C\} $, the polynomial ring in $ n $ variables $ X = (X_1,\ldots,X_n) $ is denoted $ K[X] $. By $ K[X]_{d} $, we denote the finite-dimensional subspace of all homogeneous polynomials of degree $ d \in \N_0 $, which are also called $ d $-forms. Unless explicitly stated otherwise, a polynomial on $ \C^n $ is by default assumed to be in variables $ X $. Both algebraic unknowns and random vectors are always denoted with capital letters. 

We assume familiarity with the notion of a Gaussian distribution $ \mathcal{N}(\mu, \Sigma) $, which is parametrized by a mean vector $ \mu \in \R^n $ and a positive definite covariance matrix $ \Sigma \in \R^{n\times n} $. For each $ \alpha \in \N_0^n $ and each random vector $ Y $ with distribution $ Y\sim \mathcal{N}(\mu, \Sigma) $, the expected value 
\begin{align}
	\E[Y^{\alpha}] = \int_{\R^n} x^{\alpha} d\mathcal{N}(\mu, \Sigma)(x) 
\end{align}
of the monomial $ Y^{\alpha} $ exists. The value $ |\alpha| = \alpha_1 + \ldots + \alpha_n $ is called the \emph{degree} or \emph{order} of the moment. If one knows all moments of order $ d $ of some probability distribution, then one may compute the expectation $ \E[f(Y)] $ of any given $ d $-form $ f\in \C[X]_{d} $. Moments are related to the coefficients of the \emph{moment generating series}
\begin{align}
	\mathcal{M}(Y) := \sum_{\alpha \in \N_0^n} \frac{1}{|\alpha|!} \binom{d}{\alpha} \E[Y^{\alpha}] X^{\alpha} \in \R[[X]],
\end{align}
which is a formal power series in $ X $. In \Cref{sec:gaussian-moments}, we recall combinatorial expressions for the homogeneous parts of $ \mathcal{M}(Y) $ in terms of the mean and the covariance. 

Gaussian mixtures are real objects by nature, but it helps a lot to examine identifiability over the complex field. Therefore, we formally extend all notions related to Gaussians distributions to the domain of complex numbers. In particular, the \emph{Gaussian moment variety}, introduced in \Cref{def:gm-variety}, will contain the moments of formal ``Gaussians'' $ \mathcal{N}(\mu, \Sigma) $, where $ \mu \in \C^n $ and $ \Sigma \in \C^{n\times n} $ is a symmetric matrix with complex entries. These objects do not have any statistical meaning (to the best of our knowledge), and are just considered to facilitate the analysis. 

For $ K\in \{\R, \C\} $, we endow any finite dimensional $ K $-vector space $ U \cong K^n $ with the $ K $-Zariski topology on $ U $. The closed sets with respect to this topology are the solution sets of systems of polynomial equations on $ U $. Algebraic (sub)varieties (of $ U $) are dense subsets of closed sets in $ U $. In the literature, this definition of a variety is sometimes also called a \emph{quasi-affine variety}. All our closed varieties $ V $ will be \emph{affine cones}, saying that for each $ x\in V $ and $ \lambda\in K $, also $ \lambda x \in V $. We denote the tangent space at $ x\in V $ by $ T_{x}V $. For each $ x\in V $, the tangent space is a linear subspace of $ K^n $. 

We use the classic symbols from Landau notation, $ f=\mathcal{O}(g) $ and $ f=\Theta(g) $, to denote the behaviour of certain functions in large numbers of variables $ n $.  In addition, we introduce $ f=\Theta^{\#}(g) $, to denote that $ \lim_{n\to \infty} \frac{f(n)}{g(n)} = 1 $. This relation is stricter than $ f=\Theta(g) $: While $ f=\Theta(g) $ means that the functions $ f $ and $ g $ asymptotically differ only by a multiplicative constant, $ f=\Theta^{\#}(g) $ means that this constant is one. 

The base case of the proof of \Cref{prop:gm6-not-1-tangentially-weakly-defective} was verified on a computer. The secant dimensions presented in \Cref{sec:numerics} were also calculated numerically. Data and code for both can be found in the accompanying git repository, see \cite{Taveira_Blomenhofer_Code_Gaussian_Identifiability_2023}. When referring to specific parts of this git repository, we will use a citation with the path to the file or the folder, relative to the main folder named \texttt{gaussian-identifiability} of the repository, e.g., \cite[\texttt{path/to/folder}]{Taveira_Blomenhofer_Code_Gaussian_Identifiability_2023}. 
We used the \texttt{Julia} \cite{Julia-2017} programming language, with the non-base package \texttt{DynamicPolynomials.jl} \cite{legat2021multivariatepolynomials}. 

\subsection{Identifiability in geometry and the Waring problem}\label{sec:ident-in-geometry}

Aside from the statistical identifiability problem, there is a second notion of \emph{identifiability of secant varieties}, which stems from a geometrical perspective. Here, an algebraic variety\footnote{Precisely, an irreducible affine cone.} $ V\subseteq \C^N $ is given and the question is: When is an $ m $-fold sum of elements of $ V $ uniquely representable as an $ m $-fold sum of elements of $ V $? Just like Gaussian mixtures, secant identifiability is a problem with a similarly long history (\cite{Sylvester_1904},\cite{Te12},\cite{Hilbert_1933_letter}). The development of the theory of secant varieties was motivated by the study of the $ k $-Waring problem: Given $ m, k,d\in \N $ and $ k $-forms $ q_1,\ldots,q_m \in \C[X]_{k} $, when are these the only $ k $-forms such that the sum of their $ d $-th powers equals 
\begin{align}
	f = q_1^d + \ldots + q_m^d \text{?} 
\end{align}
The most classical case is the $ 1 $-Waring problem, which corresponds to secant identifiability of Veronese varieties. Some of the major achievements for the $ 1 $-Waring problem were the Alexander-Hirschowitz theorem \cite{hirschowitz1995polynomial}, then a series of work due to Chiantini, Ottaviani and Vannieuwenhoven on identifiability of forms of subgeneric rank (\cite{Chiantini_Ottaviani_2012},\cite{Chiantini_Ottaviani_Vannieuwenhoven_2014},\cite{Chiantini_Ottaviani_Vannieuwenhoven_2016}), and finally Galuppi and Mella's classification of all the cases where a generic form is identifiable with respect to $ 1 $-Waring decompositions \cite{galuppi2019identifiability}. 

The Waring problem for higher $ k $ is also classical. An early specific case was famously considered in 1913 by Ramanujan (\cite{ramanujan1913problem}, \cite[p. 326]{ramanujan2000collected}). Another explicit consideration of a higher order Waring problem was in 1880 due to Desboves (\cite[p. 684]{Dickson_1966}). 
Initially this problem caught the interest of mathematicians due to unexpected patterns of dependence among powers of forms. See the work of Reznick, who mentions the two classical problems from above in \cite{Reznick_2003} and \cite{Reznick_2021}. Recently, $ k $-Waring decompositions have also been studied from the viewpoint of secant identifiability, e.g., by Lundqvist, Oneto, Reznick and Shapiro \cite{Lundqvist_Oneto_Reznick_Shapiro_2019}.

\medskip 
It turns out that the statistical and the geometrical notions of identifiability are related: The Waring problem for linear forms is connected to identifiability for finitely supported distributions, and to mixtures of Gaussians with \emph{identical} covariance matrices, for the latter see the author's doctoral thesis \cite[p. 42, p. 47]{Taveira_Blomenhofer_Thesis}. The Waring problem for quadratic forms is connected to mixtures of \emph{centered} Gaussians, where every Gaussian has the same mean (\cite{Blomenhofer_Casarotti_Michalek_Oneto_2022},\cite{Taveira_Blomenhofer_UPOF_2023}). 

However, in the fully general case, Gaussian mixture identifiability is not a Waring problem. Instead, it asks about uniqueness of representations
\begin{align}
	f = s_d(\ell_1, q_1)  + \ldots + s_d(\ell_m, q_m), 
\end{align}
where $ s_d(\ell, q) $ is some explicit, bivariate (weighted homogeneous) polynomial expression in $ \ell, q $. The $ 1 $-Waring problem is recovered, if all quadratic forms $ q_i $ are set to zero. From a statistical viewpoint, setting $ q_i $ to zero degenerates the Gaussians to Dirac distributions. As a result, one obtains a problem of \emph{atom reconstruction} from finitely supported measures. The $ 2 $-Waring problem is recovered, if $ d $ is even and all linear forms $ \ell_i $ are set to zero. Therefore, Gaussian moment decompositions can be seen as sort of an ``interpolation'' between linear Waring decomposition and quadratic Waring decomposition. 

\medskip
In the following, we will briefly introduce the main technical tools and results needed from the theory of secant varieties for an abstract variety $ V $. 

\begin{defi}\label{def:identifiable}
	Let $ V $ be an irreducible affine cone in some complex vector space and $ m\in \N $. One says that $ V $ is \emph{(generically) $ m $-identifiable}, if a sum 
	\begin{align}\label{eq:def-identifiable-1}
		z = x_1 + \ldots + x_m
	\end{align}
	of $ m $ general elements of $ V $ has no decomposition as a sum of $ m $ elements of $ V $ other than \eqref{eq:def-identifiable-1}. 
\end{defi}

\begin{defi}\label{def:secant-variety}
	Let $ V $ be an irreducible affine cone in some complex vector space and $ m\in \N $. The Zariski closure of the set 
	\begin{align}
		\{x_1 + \ldots + x_m \mid x_1,\ldots,x_m\in V\}
	\end{align}
	is called the $ m $-th \emph{secant variety} of $ V $, and denoted $ \sigma_m(V) $. 
\end{defi}

The notion of identifiability exists also for individual elements of a secant variety. 

\begin{defi}
	Let $ V $ be an irreducible affine cone, $ m\in \N $ and $ t\in \sigma_m (V) $. Then, $ t $ is called $ m $-\emph{identifiable} with respect to $ V $, if there is precisely one way to write $ t $ as a sum of $ m $ elements of $ V $. 
\end{defi}

\begin{lemma}[{Terracini, \cite{Te12}}]\label{lem:terracini}
	Let $V \subseteq \C^{N}$ be an irreducible and non-degenerate affine cone of dimension $n$, $m \in \N$ and $ x_1,\ldots,x_m $ general points of $ V $. Then, for all general points $ z\in \langle x_1,\ldots,x_m \rangle $, it holds
	\begin{align}
		T_{z} \sigma_m(V) = \sum_{i = 1}^{m} T_{x_i} V
	\end{align}
	Furthermore, if the sum 
	\begin{align}
		\sum_{i = 1}^{m} T_{x_i} V = \bigoplus_{i=1}^m T_{x_i} V
	\end{align}
	is direct, then each general $ z\in \sigma_m(V) $ has only finitely many decompositions as a sum of $ m $ elements of $ V $. In that case, we say that $ V $ is \emph{$ m $-nondefective}.
\end{lemma}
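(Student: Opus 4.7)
The plan is to analyze the summation morphism
\[
\mu\colon V^m \to \C^N, \quad (x_1, \ldots, x_m) \mapsto x_1 + \ldots + x_m,
\]
whose image closure equals $\sigma_m(V)$ by \Cref{def:secant-variety}. Its differential at $(x_1, \ldots, x_m)$ is the linear map
\[
d\mu_{(x_1, \ldots, x_m)} \colon T_{x_1}V \oplus \ldots \oplus T_{x_m}V \to \C^N, \quad (v_1, \ldots, v_m) \mapsto v_1 + \ldots + v_m,
\]
so its image is precisely $\sum_{i=1}^{m} T_{x_i}V$. First I would invoke the standard fact that, in characteristic zero, a dominant morphism $f\colon X \to Y$ of irreducible varieties has $df_p$ surjective onto $T_{f(p)}Y$ at a generic source point $p$. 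Applied to $\mu$, which is dominant onto $\sigma_m(V)$ by construction, this yields $T_z \sigma_m(V) = \sum_{i=1}^{m} T_{x_i}V$ for $z = x_1 + \ldots + x_m$ and a generic tuple $(x_1, \ldots, x_m) \in V^m$.

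Next, I would upgrade the identity to an arbitrary general $z \in \langle x_1, \ldots, x_m \rangle$ using the cone structure of $V$. Write $z = \lambda_1 x_1 + \ldots + \lambda_m x_m$ for generic scalars $\lambda_i \in \C$. Since $V$ is a cone, each $\lambda_i x_i$ lies in $V$, and a short computation (rescaling an analytic arc in $V$ through $x_i$) shows $T_{\lambda_i x_i}V = T_{x_i}V$ whenever $\lambda_i \neq 0$. The rescaled tuple $(\lambda_1 x_1, \ldots, \lambda_m x_m)$ is again generic in $V^m$, so the first step applies to it and gives $T_z \sigma_m(V) = \sum_i T_{\lambda_i x_i}V = \sum_i T_{x_i}V$.

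For the second claim, assume that the sum of tangent spaces is direct at a generic tuple. Combined with the tangent identity, at a generic smooth point $z \in \sigma_m(V)$ this forces
\[
\dim \sigma_m(V) = \dim T_z \sigma_m(V) = \sum_{i=1}^{m} \dim T_{x_i}V = m n = \dim V^m.
\]
Since $\mu$ is dominant between irreducible varieties of equal dimension, the fiber $\mu^{-1}(z)$ over a generic $z$ is zero-dimensional, hence finite. Quotienting out the permutation action of $S_m$ on $V^m$, a generic $z$ therefore admits only finitely many unordered decompositions as a sum of $m$ elements of $V$.

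The only truly delicate step is the upgrade from $z = \sum x_i$ to arbitrary $z \in \langle x_1, \ldots, x_m \rangle$: it relies crucially on $V$ being an affine cone, since otherwise the basic Terracini argument only controls the tangent space at the single sum $z = \sum x_i$. All other ingredients are standard applications of generic smoothness and fiber dimension in characteristic zero, so I do not expect any serious technical obstacle.
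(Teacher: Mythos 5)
The paper does not prove this lemma at all: it is stated as a classical result and attributed to Terracini \cite{Te12}, so there is no internal argument to compare yours against. That said, your proof is the standard and correct one: the differential of the addition map $\mu\colon V^m \to \C^N$ has image $\sum_i T_{x_i}V$, generic smoothness in characteristic zero identifies this with $T_z\sigma_m(V)$ at $z=\sum_i x_i$ for a generic tuple, and the cone structure (the scaling $v\mapsto\lambda v$ is an automorphism of $V$ with $T_{\lambda x}V=\lambda\, T_xV=T_xV$ for $\lambda\neq 0$) upgrades the statement to a general point of $\langle x_1,\ldots,x_m\rangle$, since the rescaled tuple still lies in the dense open ``good'' locus of $V^m$ for generic scalars. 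The finiteness claim then follows from the fiber-dimension theorem once $\dim\sigma_m(V)=mn=\dim V^m$; here you implicitly use that generic $x_i$ are smooth points so that $\dim T_{x_i}V=n$, and that a general point of $\sigma_m(V)$ actually lies in the (constructible, dense) image of $\mu$ — both are fine but worth saying. You correctly identified the one delicate step (the passage from $z=\sum x_i$ to a general point of the span), and your handling of it is sound.
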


We need one more technical definition, before we are ready to state the main tool that we use for identifiability. 

\begin{defi} 
	Let $ V $ an irreducible affine cone and $ x\in V $. We write 
	\begin{align}
		\Gamma_{V}(x) = \{y\in V\mid T_{y}V \subseteq T_{x} V\}.
	\end{align}
	The union of all irreducible components of $ \Gamma_{V}(x) $ that pass through $ x $ is denoted $ \mathcal{C}_{V}(x) $. We call $ \mathcal{C}_{V}(x) $ the \emph{tangential contact locus} of $ V $ at $ x $.
	
	\smallskip
	Note that the tangential contact locus always contains the line $ \C x $ of multiples of $ x $.  
	The variety $ V $ is called $ 1 $-\emph{tangentially weakly defective}, if the dimension of $ \mathcal{C}_{V}(x) $ at general $ x\in V $ is strictly greater than $ 1 $. 
\end{defi}

Tangential weak defectivity might seem like a complicated, technical constraint, but intuitively, it should be seen as a condition that the base variety we start with is ``reasonable''. For instance, if $ V \subseteq \C^N $ is a linear subspace of dimension at least $ 2 $, then $ V $ is $ 1 $-tangentially weakly defective, since all points have the same tangent space. On the other hand, it is also completely clear that a sum of $ 2 $ elements of a linear subspace $ V $ will not uniquely determine the summands. 
As a similar example, Massarenti and Mella show that the secant $ V = \sigma_2(V_0) $ of an irreducible and nondegenerate variety $ V_0 $ is never $ 1 $-tangentially weakly defective, and also never $ 2 $-identifiable \cite[Proposition 2.20 and Remark 3.11]{Massarenti_Mella_2022}. The reason is that by Terracini's Lemma, for general $ x, y\in V_0 $, all general points in $ \langle x, y \rangle \subseteq \sigma_2(V_0) $ have the same tangent space.

\begin{theorem}[{Massarenti-Mella \cite[Theorem 1.5, Remark 2.3]{Massarenti_Mella_2022}}]\label{thm:massarenti-mella}
	Let $V \subseteq \C^{N}$ be an irreducible and non-degenerate affine cone of dimension $n$. Let $m \in \N$ and assume that
	\begin{enumerate}
		\item $ mn\leq N-n $ 
		\item $V$ is not $ 1 $-tangentially weakly defective,
		\item $V$ is not $(m + 1)$-defective.
	\end{enumerate}
	Then, $V$ is $m$-identifiable.
\end{theorem}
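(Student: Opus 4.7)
\medskip
\noindent\textbf{Proof plan for \Cref{thm:massarenti-mella}.} I would argue by contradiction, assuming $V$ is not $m$-identifiable, and derive a contradiction with either condition (2) or condition (3). First, note that condition (1) gives $(m+1)n\le N$, so the expected dimension of $\sigma_{m+1}(V)$ is $(m+1)n$, and condition (3) says this is attained. By Terracini's lemma (\Cref{lem:terracini}), for general $x_1,\ldots,x_{m+1}\in V$ the tangent spaces $T_{x_i}V$ form a direct sum in $\C^N$. In particular, $V$ is also $m$-nondefective: the general fiber of the abstract summation map $\pi\colon V^m/S_m\to\sigma_m(V)$ is finite. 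So if $V$ fails to be $m$-identifiable, the general fiber of $\pi$ still contains at least two distinct unordered decompositions.

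\medskip
Pick two such decompositions $\{x_1,\ldots,x_m\}$ and $\{y_1,\ldots,y_m\}$ of a general $z\in\sigma_m(V)$, and after relabeling assume $y_1\notin\C x_i$ for any $i$. Applying Terracini to both decompositions yields
\[
\bigoplus_{i=1}^m T_{x_i}V \;=\; T_z\sigma_m(V) \;=\; \sum_{j=1}^m T_{y_j}V,
\]
so in particular $T_{y_1}V\subseteq\bigoplus_i T_{x_i}V$. Thus $y_1$ lies in the $m$-th tangential contact locus $\mathcal{C}_V(x_1,\ldots,x_m):=\{y\in V \mid T_yV\subseteq\bigoplus_i T_{x_i}V\}$, and this contact locus strictly contains the expected union of lines $\bigcup_i\C x_i$.

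\medskip
The core of the argument, and the main obstacle, is now to rule this out: one must show that under (2) and (3) the $m$-th tangential contact locus at a general $(x_1,\ldots,x_m)$ equals $\bigcup_i\C x_i$. My approach would be to proceed by induction on $m$, with the base case $m=1$ being precisely hypothesis (2). For the inductive step, suppose there is an irreducible component $Y$ of $\mathcal{C}_V(x_1,\ldots,x_m)$ of dimension $\ge 2$ not contained in $\bigcup_i\C x_i$. Specialize $x_m$ to a moving point $y(t)\in Y$: one gets a positive-dimensional family of decompositions $(x_1,\ldots,x_{m-1},y(t))$ whose sum moves in a proper subvariety of $\sigma_m(V)$; the derivative of this family feeds extra tangent directions into $T_{x_1+\cdots+x_{m-1}+y(t)+y'}\sigma_{m+1}(V)$ for any additional general $y'$, producing a nontrivial linear relation among $T_{x_1}V,\ldots,T_{x_{m-1}}V,T_{y(t)}V,T_{y'}V$. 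This relation forces a rank drop of the $(m+1)$-Terracini tangent space, contradicting (3). Alternatively, one projects $V$ from the span $\langle x_2,\ldots,x_m\rangle$ and reduces to a 1-dimensional tangential contact problem on the projected variety, where the induction hypothesis (or directly (2)) applies.

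\medskip
Once the $m$-th contact locus is known to be $\bigcup_i\C x_i$, step two forces $y_1\in\C x_{i_0}$ for some $i_0$, contradicting our relabeling. Hence no second decomposition exists, and $V$ is $m$-identifiable. I expect the projection/degeneration argument of the inductive step to be the delicate point: one must ensure that the projected variety remains irreducible and nondegenerate, and that $(m+1)$-nondefectivity of $V$ translates into a usable nondefectivity statement on the projection — this is exactly where the stronger hypothesis (3) (versus the weaker $(m+1)$-weak-nondefectivity used by Casarotti--Mella) will be essential.
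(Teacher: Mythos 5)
First, a point of order: the paper does not prove this statement at all --- it is imported verbatim from Massarenti--Mella \cite[Theorem 1.5, Remark 2.3]{Massarenti_Mella_2022}, and the surrounding text only translates their hypotheses (non-degenerate Gauss map, projective dimension counts) into the affine-cone language used here. So your proposal cannot be compared against an in-paper argument; it has to be judged against the actual proof in the cited source, which runs through Bronowski's conjecture and the birationality of tangential projections.

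Your setup is sound: the reduction to two distinct decompositions of a general $z$, the observation that $\sum_j T_{y_j}V\subseteq T_z\sigma_m(V)=\bigoplus_i T_{x_i}V$ (note you only need the easy inclusion of Terracini for the second decomposition, since the $y_j$ need not be general), and the reformulation as the statement that the $m$-th tangential contact locus at general $x_1,\ldots,x_m$ is $\bigcup_i \C x_i$. But that reformulation is exactly the assertion that $V$ is not $m$-tangentially weakly defective, i.e.\ the Chiantini--Ottaviani--Vannieuwenhoven criterion, and the entire content of the Casarotti--Mella and Massarenti--Mella theorems is that one can get away with verifying only the $1$-contact locus plus $(m+1)$-nondefectivity instead. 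Your inductive step --- that a positive-dimensional component of the $m$-th contact locus ``feeds extra tangent directions'' into the $(m+1)$-st Terracini space and forces a rank drop --- is precisely the hard implication, and as stated it is not a valid inference: weak defectivity and tangential weak defectivity do not in general imply secant defectivity (this is classical, going back to Chiantini--Ciliberto), so the existence of a moving contact component does not by itself contradict hypothesis (3). The mechanism that makes the implication work in Massarenti--Mella is different and substantially more delicate: they study the $m$-tangential projection $\tau_m\colon V\dashrightarrow \C^{N-m n-\cdots}$, prove a version of Bronowski's conjecture showing that $(m+1)$-nondefectivity forces $\tau_m$ to be generically finite and in fact birational onto its image (this is where hypothesis (1) and the non-degenerate Gauss map from (2) enter), and deduce identifiability from birationality of the projection. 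Your alternative suggestion of projecting from $\langle x_2,\ldots,x_m\rangle$ points in the right direction but is a linear projection rather than a tangential one, and the induction you propose does not close without the Bronowski-type input. In short: correct skeleton, but the load-bearing step is asserted rather than proved, and the asserted implication is false without the additional machinery.
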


Massarenti and Mella formulate \Cref{thm:massarenti-mella}(2) differently. Instead, they require that $ V $ has a nondegenerate Gauss map. By \cite[Remark 2.3]{Massarenti_Mella_2022}, $ 1 $-tangential weak defectivity is equivalent to $ V $ having a degenerate \emph{Gauss map}. The Gauss map is, informally, the function that maps smooth points $ x \in V $ to their tangent spaces $ T_{x} V $, which are seen as elements of a Grassmannian variety. We will avoid talking about Gauss maps throughout this article, since there could be confusion, if too many notions are named after Carl Friedrich Gauß. 
Condition (1) also looks differently in \cite{Massarenti_Mella_2022}: Since the authors work in projective space, both $ n $ and $ N $ are shifted by one, and the terms are arranged differently.

\subsection{Gaussian Moments}\label{sec:gaussian-moments}

A large amount of information about a distribution is stored in its moment forms.\footnote{Assuming the distribution is sufficiently nice, in the sense that integrals of polynomials exist.}  
The \emph{moment forms} of a Gaussian distribution with mean $ \mu \in \R^n $ and positive definite covariance matrix $ \Sigma \in \R^{n\times n}$ are the homogeneous parts in $ X $ of the formal power series
\begin{align}\label{eq:moment-generating-series-1}
	\exp\left(\mu^{T}X + \frac{1}{2} X^{T}\Sigma X\right)
\end{align}
in variables $  X = (X_1,\ldots,X_n) $, which is called the \emph{moment generating series} of $ \mathcal{N}(\mu, \Sigma) $. Writing $ \ell := \mu^{T}X $ and $ q := X^{T}\Sigma X $, we can conveniently expand this representation 
\begin{align}\label{eq:moment-generating-series-2}
	\exp\left(\ell + \frac{q}{2}\right) = \sum_{d = 0}^{\infty} \frac{1}{d!} \left(\ell + \frac{q}{2}\right)^d
\end{align}
Since $ \ell $ and $ q $ have different degrees, it takes a bit of effort to sort the representation \eqref{eq:moment-generating-series-2} into $ d $-homogeneous parts, but the result for the $ d $-homogeneous part is 
\begin{align}\label{eq:moment-form}
	\frac{1}{d!} \sum_{k=0}^{\lfloor d/2\rfloor} 2^{-k}\duonomial{d}{k} q^{k}\ell^{d-2k}.
\end{align}
Here, the combinatorial expression 
\begin{align}
	\duonomial{d}{k} := \dfrac{d!}{k!(d-2k)!}  = \frac{d!\binom{d-k}{k}}{(d-k)!}  = k!\binom{d}{k}\binom{d-k}{k}
\end{align} occurs, which we will refer to as the \emph{duonomial} coefficients. The connection between statistics, moments and these combinatorial expressions is explained in detail in my doctoral thesis \cite[Chapter 3]{Taveira_Blomenhofer_Thesis}. For some concrete examples, \Cref{table:gaussian-moments} shows the first Gaussian moment forms.

\subsubsection{Gaussian moment varieties}\label{sec:gaussian-moment-varieties}
Note that the Gaussian moment forms are polynomial expressions in $ (\ell, q) $. It thus makes sense to consider the polynomial morphism which sends $ (\ell,q) $ to the degree-$ d $ moment form. This map will be easier to study over the complex numbers, but we will give an argument why complex identifiability also implies identifiability over the real field (and actually all fields of characteristic zero, in analogy to \cite[Remark 3.2.5]{Taveira_Blomenhofer_Thesis}). 

\begin{defi}\label{def:gm-variety}
	The degree-$ d $ \emph{Gaussian moment variety} $ \GM_{d}(\C^n) $ is the (Zariski) closure of the image of the map 
	\begin{align}\label{eq:moment-map-1}
		s_d\colon (\ell, q) \mapsto \sum_{k=0}^{\lfloor d/2\rfloor} 2^{-k}\duonomial{d}{k} q^{k}\ell^{d-2k}
	\end{align}
	For $ d\ge 4 $, it is an irreducible, nondegenerate variety in $ \C[X]_{d} $ of dimension $ \binom{n+1}{2} + n = \frac{1}{2}n(n+3) $. 
	Formally, 
	\begin{align}
		\GM_{d}(\C^n) = \overline{\{s_d(\ell, q) \mid \ell \in \lf(\C^n), q\in \quadf(\C^n) \}} \subseteq \C[X]_{d}
	\end{align}
\end{defi}
\begin{proof}[Remarks towards \Cref{def:gm-variety}]
	Note that $ \GM_{d}(\C^n) $ is irreducible, since it is the closure of the image of a polynomial map. The variety is nondegenerate, i.e., not contained in a proper subspace of $ \C[X]_{d} $, since it contains the (nondegenerate) Veronese variety of $ d $-th powers of linear forms, as the image of all expressions $ s_d(\ell, 0) $. 
\end{proof}

The tangent space at a general point of the Gaussian moment variety can be found by deriving curves $t \mapsto s_{d}(\ell + th, q+tp) $. This yields the following: 

\begin{prop}\label{prop:gmm-parametrization-tangent}
	For $ d\ge 3 $ and general $ (\ell, q) $, the tangent space of $ \GM_{d}(\C^n) $ at $ s_{d}(\ell, q) $ is the set of expressions
	\begin{align}
		&s_{d-1}(\ell, q) h + s_{d-2}(\ell, q) p \nonumber \\
		\subjto \qquad &h\in \C[X]_{1}, p\in \C[X]_{2}
	\end{align}
\end{prop}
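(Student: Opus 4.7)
The plan is to realize the tangent space as the image of the differential of the parametrization $s_d\colon \lf(\C^n) \times \quadf(\C^n) \to \C[X]_d$, which is legitimate because $\GM_d(\C^n)$ is by definition the closure of the image of a polynomial map from a smooth (in fact linear) source, so at a general point $(\ell,q)$ the image lies in the smooth locus of $\GM_d(\C^n)$ and its tangent space coincides with $\mathrm{image}(d_{(\ell,q)} s_d)$. Thus everything reduces to computing the two partial derivatives of $s_d$ and interpreting the chain rule.

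The key calculation is to verify the two identities
\begin{align}
    \frac{\partial}{\partial \ell}\, s_d(\ell, q) \;=\; d\cdot s_{d-1}(\ell, q), \qquad \frac{\partial}{\partial q}\, s_d(\ell, q) \;=\; \binom{d}{2}\cdot s_{d-2}(\ell, q).
\end{align}
Both follow from elementary identities on the duonomial coefficients. For the first, differentiating each summand of \eqref{eq:moment-map-1} in $\ell$ produces the factor $(d-2k)$, and one checks $\duonomial{d}{k}(d-2k) = d\cdot \duonomial{d-1}{k}$ by writing both sides as $\frac{d!}{k!(d-2k-1)!}$. For the second, the summand $2^{-k}\duonomial{d}{k}q^k\ell^{d-2k}$ contributes $2^{-k}\duonomial{d}{k}\cdot k\cdot q^{k-1}\ell^{d-2k}$; after reindexing $j=k-1$ this is $2^{-(j+1)}\duonomial{d}{j+1}(j+1)\,q^j\ell^{d-2-2j}$, and the identity $\duonomial{d}{j+1}(j+1) = d(d-1)\duonomial{d-2}{j}$ is again just a factorial rearrangement. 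Conceptually these identities are the weighted-homogeneous analogue of $\partial_\ell \exp(\ell + q/2) = \exp(\ell + q/2)$ and $\partial_q \exp(\ell + q/2) = \tfrac{1}{2}\exp(\ell + q/2)$, restricted to the respective weighted-degree parts.

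Given these derivative formulas, the chain rule applied to the curve $t\mapsto s_d(\ell + th, q + tp)$ yields
\begin{align}
    \frac{d}{dt}\bigg|_{t=0} s_d(\ell+th, q+tp) \;=\; d\, s_{d-1}(\ell, q)\, h \;+\; \binom{d}{2}\, s_{d-2}(\ell, q)\, p.
\end{align}
As $h$ ranges over $\C[X]_1$ and $p$ ranges over $\C[X]_2$, the nonzero scalars $d$ and $\binom{d}{2}$ (both nonzero since $d\ge 3$) can be absorbed into $h$ and $p$ respectively, so the image of $d_{(\ell,q)} s_d$ is exactly the claimed set.

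I do not anticipate a real obstacle here; the only point that requires any care is the appeal to the smooth-locus argument identifying the tangent space of the image-closure with the image of the differential at a general source point. Everything else is a direct computation on the duonomial coefficients, which is purely combinatorial.
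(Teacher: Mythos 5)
Your proposal is correct and takes essentially the same route as the paper: both compute the tangent space as the image of the differential of the parametrization along curves $t\mapsto s_d(\ell+th,\,q+tp)$ and reduce to the identities $\partial_\ell s_d = d\,s_{d-1}$ and $\partial_q s_d = \binom{d}{2}s_{d-2}$. The only difference is cosmetic: the paper obtains these identities by differentiating the moment generating series $\exp(\ell+\tfrac{q}{2})$ and reading off homogeneous parts, whereas you verify them by direct factorial manipulation of the duonomial coefficients --- a step you yourself identify as the weighted-homogeneous shadow of the same exponential computation.
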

\begin{proof}
	Note that by definition, $ s_d(\ell, q) $ is the degree-$ d $ homogeneous part of the moment generating series $ \mathcal{M}(\ell, q) := \exp(\ell + \frac{q}{2}) $. When deriving $ s_d $, we can therefore make use of the fact that the exponential series behaves nice under derivatives. For instance, $ \frac{d}{dt} \mathcal{M}(\ell + th, q)|_{t=0} = h \exp(th + \ell + \frac{q}{2})|_{t=0} = h \mathcal{M}(\ell, q)$. Hence, the derivative along a curve through $ \ell $ shifts all homogeneous parts by one degree, so the $ \ell $ derivative of $ s_d $ equals $ s_{d-1} $, up to a normalization  constant. Similarly, we see that $ \frac{d}{dt} \mathcal{M}(\ell, q+tp)|_{t=0} = \frac{p}{2}\exp(\ell + \frac{q}{2}) $. Therefore, up to a constant, the $ q $-derivative of $ s_{d} $ equals $ s_{d-2} $.  
\end{proof}

\subsection{Nenashev's theorem and Fröberg's conjecture}

The interaction of tangent spaces at various general points $ s_{d}(\ell_i, q_i) $ plays a crucial role in identifiability, as seen, e.g., from Terracini's \Cref{lem:terracini} and \Cref{thm:massarenti-mella}. While for \emph{fixed} $ m, d $ and $ n $, determining the dimension of the sum of $ m $ general tangent spaces to $ \GM_{d}(\C^n) $ is a problem of Linear Algebra, unfortunately, it is often quite difficult to determine these dimensions as functions of $ m, d $ and $ n $. 

In earlier work on some special cases of Gaussian mixtures \cite{Blomenhofer_Casarotti_Michalek_Oneto_2022}, it was possible to describe such a sum of tangent spaces as the graded component of an ideal generated by powers of forms. Then, knowledge about the Hilbert series of ``general'' ideals could be used. One major tool here was Nenashev's partial resolution of Fröberg's conjecture. 

\begin{theorem}[{Nenashev, \cite[Theorem 1]{nenashev2017note}}]\label{thm:nenashev}
	Let $ a\in \N $ and let $ \mathcal{D} $ be a variety of degree-$ a $ forms which is closed under the canonical action of $ \GL_{n}(\C) $ on forms. For $ m, h\in \N $, general $ g_1,\ldots, g_m \in \mathcal{D} $, and $ I=(g_1,\ldots, g_m) $, as long as 
	\begin{align}
		m&\le \dfrac{\dim S^{a+h}(\C^n)}{\dim S^{h}(\C^n)} - \dim S^{h}(\C^n) \qquad \text{or} \label{eq:nenashev-1}\\
		m&\ge \dfrac{\dim S^{a+h}(\C^n)}{\dim S^{h}(\C^n)} + \dim S^{h}(\C^n)\label{eq:nenashev-2}
	\end{align}
	it holds that: 
	\begin{align}
		&(g_1,\ldots, g_m)_{a+h} = (g_1)_{a+h} \oplus \ldots \oplus (g_m)_{a+h}\text{ for $m$ as in \ref{eq:nenashev-1}}.\label{eq:nenashev-hf-1} \\
		&(g_1,\ldots, g_m)_{a+h} = S^{a+h}(\C^n) \qquad \qquad \qquad\,\text{ for $m$ as in \ref{eq:nenashev-2}}.\label{eq:nenashev-hf-2}
	\end{align}
\end{theorem}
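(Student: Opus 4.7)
The plan is to reformulate both conclusions as a rank statement and establish the required rank by specialization. Set $V = S^{h}(\C^{n})$, $W = S^{a+h}(\C^{n})$, and consider
\begin{align*}
\mu_{g_{1},\ldots,g_{m}} \colon V^{m} \to W, \qquad (f_{1},\ldots,f_{m}) \mapsto \sum_{i=1}^{m} g_{i} f_{i}.
\end{align*}
The image of $\mu$ is $(g_{1},\ldots,g_{m})_{a+h}$, and since each map $V \to W$, $f \mapsto g_{i} f$, is injective (as $\C[X]$ is a domain), assertion \eqref{eq:nenashev-hf-1} is equivalent to injectivity of $\mu$ and \eqref{eq:nenashev-hf-2} is equivalent to its surjectivity. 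The rank of $\mu$ is lower-semicontinuous in $(g_{1},\ldots,g_{m})$, so in each case it suffices to exhibit a single tuple in $\mathcal{D}^{m}$ for which $\mu$ attains the required rank.

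To produce such a tuple, I would exploit the $\GL_{n}(\C)$-invariance of $\mathcal{D}$ via a toric degeneration. Fix a maximal torus $T \subset \GL_{n}(\C)$ and a generic one-parameter subgroup $\rho \colon \C^{\ast} \to T$. For a $\GL_{n}$-generic $g \in \mathcal{D}$, the rescaled limit $\lim_{t\to 0} t^{-w(g)}\rho(t)\cdot g$ is a single monomial $x^{\alpha}$ of degree $a$, which lies in $\overline{\mathcal{D}} = \mathcal{D}$ by closedness. Varying $\rho$ together with an initial $\GL_{n}$-translate of $g$ allows accessing different monomials in each of the $m$ slots, modulo which monomials lie in the orbit closures available inside $\mathcal{D}$. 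Performing this degeneration slot-wise and invoking lower-semicontinuity of rank reduces the problem to the case where each $g_{i}$ is a monomial $x^{\alpha_{i}}$.

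For monomial generators, $(x^{\alpha_{i}})_{a+h}$ is spanned by $\{x^{\alpha_{i}+\gamma} : |\gamma|=h\}$, and the two assertions become purely combinatorial: injectivity says that the translates $\alpha_{i}+\Delta_{h} \subseteq \Delta_{a+h}$ are pairwise disjoint, while surjectivity says that their union exhausts $\Delta_{a+h}$. Here $\Delta_{k}$ denotes the set of degree-$k$ multi-indices in $\N_{0}^{n}$. A naive pigeonhole estimate only yields the weaker thresholds $m \leq \dim W / \dim V$ for injectivity and $m \geq \dim W / \dim V$ for surjectivity; the sharper thresholds $\dim W / \dim V \pm \dim V$ in the statement should reflect the smallest unavoidable boundary inefficiency when packing or covering the simplex $\Delta_{a+h}$ of cardinality $\dim W$ by $m$ translates of the simplex $\Delta_{h}$ of cardinality $\dim V$.

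The main obstacle I foresee is the combinatorial packing/covering problem together with the concern that an arbitrary $\GL_{n}$-invariant $\mathcal{D}$ may not admit enough distinct monomial specializations. For the packing and covering, I would attempt a greedy construction along anti-diagonals or a lexicographic order, so that the waste concentrates onto a single boundary face of $\Delta_{a+h}$ of size $\dim V$. For the orbit-closure restriction on $\mathcal{D}$, I would either narrow the available $\alpha_{i}$'s accordingly or complement the orbit analysis with a Macaulay-type estimate on $\dim (g)_{a+h}$ via the Hilbert function of the artinian quotient $S(\C^{n})/\mathrm{Ann}(g)$, and then aggregate by inclusion-exclusion.
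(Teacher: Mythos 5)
The paper offers no proof of this statement: it is quoted from Nenashev's note and used as a black box, so there is no internal argument to compare yours against. Assessed on its own, your proposal opens correctly --- the reformulation of \eqref{eq:nenashev-hf-1} and \eqref{eq:nenashev-hf-2} as injectivity, respectively surjectivity, of the multiplication map $\mu$, and the appeal to lower semicontinuity of its rank, are standard and sound --- but the central specialization step has a genuine gap. A limit $\lim_{t\to 0}t^{-w}\rho(t)\cdot g$ along a one-parameter subgroup can only land on a monomial that itself lies in $\mathcal{D}$, and a $\GL_n(\C)$-invariant variety of degree-$a$ forms may contain very few monomials. In the instance this paper actually relies on, $\mathcal{D}$ is the variety of squares $q^2$ of quadratic forms ($a=4$, $h=2$), whose monomials are exactly the $x^{2\beta}$ with $|\beta|=2$; there are only $\binom{n+1}{2}=O(n^2)$ of them, while the theorem is invoked with $m=\Theta(n^4)$. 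You therefore cannot produce $m$ distinct monomial specializations, and sending two slots to the same monomial puts $(0,\ldots,f,\ldots,-f,\ldots,0)$ in the kernel of $\mu$, so the specialized rank is strictly submaximal and semicontinuity yields nothing. The approach thus already fails for the case the paper needs.

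Even if $\mathcal{D}$ contained every degree-$a$ monomial, what remains is an unproved combinatorial claim: that $\Delta_{a+h}$ admits a packing (resp.\ covering) by $m$ translates $\alpha_i+\Delta_h$ for every $m$ up to (resp.\ down from) the stated threshold. You flag this yourself, but the proposed remedies (a greedy or lexicographic packing, Macaulay-type bounds with inclusion--exclusion) are not carried out, and this is exactly the regime where monomial specialization is known to be lossy: ideals generated by generic monomials generally do not attain the Hilbert-function bounds that generic forms do, so even a solved packing problem would not obviously recover the sharp constants $\dim S^{a+h}(\C^n)/\dim S^{h}(\C^n)\mp\dim S^{h}(\C^n)$. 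To prove the theorem rather than cite it, the $\GL_n$-invariance has to be exploited on general, non-monomial elements directly, which is how the argument in the cited reference proceeds.
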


A special case, classically considered by Alexander and Hirschowitz, is when the variety $ \mathcal{D} $ in \Cref{thm:nenashev} is the Veronese variety of powers of linear forms. 

\begin{theorem}[{Alexander-Hirschowitz, cf. \cite[Theorem 1.1]{Brambilla_Ottaviani_2008}, originally in \cite{hirschowitz1995polynomial}}] \label{thm:alexander-hirschowitz}
	Let $ n, d, m\in \N_{\ge 2} $ and $ I = (\ell_{1}^{d-1},\ldots,\ell_{m}^{d-1}) $ be an ideal generated by $ (d-1) $-th powers of \emph{generic} linear forms $ \ell_{1},\ldots,\ell_{m} $ on $ \C^n $. Then, the homogeneous component $ I_{d} $ of $ I $ has the expected dimension, which is $ \min \{mn, \binom{n+d-1}{d}\} $, in all but the following exceptional cases:
	\begin{itemize}
		\item $ d = 2 $ and $ m = 2,\ldots,n-1 $.
		\item $ n = 3 $, $ d = 4 $, $ m = 5 $.
		\item $ n = 4 $, $ d = 4 $, $ m = 9 $. 
		\item $ n = 5 $, $ d = 3 $, $ m = 7 $.
		\item $ n = 5 $, $ d = 4 $, $ m = 14 $.
	\end{itemize}
\end{theorem}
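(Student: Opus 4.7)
The Alexander-Hirschowitz theorem is a deep classical result whose full proof occupies a substantial paper in its own right, so what follows is a proposal for the overall architecture rather than a self-contained argument. The first move is apolarity: via Macaulay's inverse system, I would identify the dimension of $(\ell_1^{d-1},\ldots,\ell_m^{d-1})_d$ with a codimension in $\C[X]_d$ of forms satisfying certain differential conditions at $m$ general points. Concretely, for a generic linear form $\ell_i = a_i^{T}X$, applying $\ell_i^{d-1}$ as a differential operator (after polarizing) annihilates exactly the forms in $\C[X]_d$ whose partials of order $d-1$ vanish at $a_i$, i.e. the forms vanishing to order $2$ at the point $a_i \in \C^n$. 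Thus proving that $\dim I_d = \min\{mn, \binom{n+d-1}{d}\}$ is equivalent to showing that $m$ generic \emph{double points} (fat points of multiplicity $2$) in affine space impose independent conditions on $d$-forms, whenever there is room for this. This is Terracini's lemma in disguise, applied to the Veronese variety $\nu_d(\P^{n-1})$.

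Next I would run the induction by the \emph{méthode d'Horace différentielle} of Alexander-Hirschowitz. Fix a hyperplane $H \subseteq \C^{n}$ and split the $m$ generic double points into two groups: some are specialized to lie on $H$ (with one of their two tangent directions constrained), while the others remain off $H$. Intersecting the linear system with $H$ gives a system on $H$ with fewer points, while the residual scheme defines a system on $\C^n$ of lower degree. Upper semicontinuity of dimension under specialization yields that it suffices to verify the bound for the residual and trace systems, which sit in smaller parameter regimes. By inducting on both $n$ and $d$ simultaneously, the generic statement for $(n,d,m)$ is reduced to statements in strictly lower $n$ or $d$. One has to balance the number of specialized points so that both the trace on $H$ and the residual are in the non-defective range provided by the inductive hypothesis, which is a delicate combinatorial bookkeeping but not conceptually obstructed.

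The base cases and exceptional cases must then be handled separately. The case $d=2$ corresponds to the fact that quadrics vanishing doubly at $m\le n-1$ generic points have a larger-than-expected system, essentially because any quadric singular at $m$ independent points must contain their span. For the four sporadic exceptions $(n,d,m)\in\{(3,4,5),(4,4,9),(5,3,7),(5,4,14)\}$, the defect stems from the existence of a low-degree hypersurface (a rational normal curve, a quadric, etc.) passing through the points and forcing an unexpected dependence; these require ad hoc verifications, traditionally done by explicit construction of the exceptional divisor or by a direct numerical check, which today can be automated over a finite field and lifted by semicontinuity.

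The main obstacle is the bookkeeping in the Horace differential induction: one must simultaneously choose how many points to specialize onto $H$ \emph{and} in which tangent direction, so that both trace and residual remain in the ``expected dimension'' regime and the inductive hypothesis applies. The exceptional cases are precisely those values of $(n,d,m)$ where no valid distribution of specialized points exists, and identifying them by hand (rather than just knowing they are finite in number) is the subtle part. Everything else — the apolarity reduction, the semicontinuity, and the general structure of the induction — is formal once the specialization scheme is set up.
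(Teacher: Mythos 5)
This statement is quoted in the paper as a classical citation (Alexander--Hirschowitz, via Brambilla--Ottaviani); the paper supplies no proof of its own, only a remark translating the dual, projective formulation into its notation. So there is nothing internal to compare your proposal against, and I can only assess it against the known literature proof. On that score your architecture is the right one and matches the standard argument: Macaulay apolarity identifies $\dim(\ell_1^{d-1},\ldots,\ell_m^{d-1})_d$ with the number of conditions that $m$ general double points of $\P^{n-1}$ impose on degree-$d$ forms (your derivation of the order-two vanishing condition is correct in substance, though the phrase ``partials of order $d-1$ vanish at $a_i$'' should really be ``the $(d-1)$-fold contraction against $a_i$ vanishes, i.e.\ $\nabla f(a_i)=0$''); Terracini converts this into nondefectivity of secants of the Veronese; and the induction is the differential Horace method. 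Your explanations of the exceptions are also the standard ones: for $d=2$ the singular locus of a quadric is linear, so singularity at $m$ points forces singularity along their span; the four sporadic cases come from doubled quadrics and the secant variety of a rational normal curve.

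The gap is that the proposal stops exactly where the theorem begins. The sentence claiming the Horace balancing is ``a delicate combinatorial bookkeeping but not conceptually obstructed'' understates the situation: the naive Horace method (specializing whole double points onto $H$) provably cannot close the induction because the arithmetic of trace and residual dimensions does not match in general, which is precisely why Alexander and Hirschowitz had to introduce the \emph{differential} Horace lemma (specializing only one ``layer'' of a double point onto $H$, i.e.\ vertically graded residual schemes) together with further devices such as discarding conditions and specializing collections of points onto linear subspaces of higher codimension. Carrying this out, verifying the base cases in low degree (notably $d=3$, which resists the induction and needs a separate argument), and proving that the listed cases are the \emph{only} exceptions is the entire content of the theorem. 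As a citation-level justification your outline is fine and accurately reflects the source the paper points to; as a proof it is an honest but empty shell, and you are right to present it as such.
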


Note that we rewrote the result to fit our notation: Brambilla and Ottaviani \cite{Brambilla_Ottaviani_2008}, just like Alexander and Hirschowitz \cite{hirschowitz1995polynomial}, formulate \Cref{thm:alexander-hirschowitz} from a dual perspective. Also, in \cite{Brambilla_Ottaviani_2008}, they use projective dimensions.

Unfortunately, the tangent spaces to secants of $ \GM_{d}(\C^n) $ do not have a structure simple enough to be covered by \Cref{thm:nenashev}. However, via some careful degeneration argument, we will still be able to make use of \Cref{thm:nenashev}. The following observation is trivial, but we will need to refer to it so often, that it is worth stating it as a Lemma.

\begin{lemma}\label{lem:disjoint-variable-skewness}
	Let $ m\in \N $ and let $ f_1,\ldots,f_m \in \C[X] $ be forms of degree $ d\in \N $. Let $ k_1\in \N $ be such that $ (f_1,\ldots,f_m)_{d+k_1} = (f_1)_{d+k_1} \oplus \ldots \oplus (f_m)_{d+k_1} $ in $ \C[X] $. Let $ Y = (Y_1,\ldots,Y_{n'}) $ another vector of variables. Then for all $ k_2\in \N $ and all bihomogeneous $ h_1,\ldots,h_m\in \C[X, Y]_{k_1+k_2} $ of $ X $-degree $ k_1 $ such that 
	\begin{align}
		0 = \sum_{i = 1}^{m} f_i h_i, 
	\end{align}
	it holds $ h_1 = \ldots = h_m = 0 $. 
\end{lemma}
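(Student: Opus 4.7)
The plan is to reduce the bivariate statement (in $X$ and $Y$) to the pure $X$-statement already packaged in the hypothesis, by separating variables via a $Y$-monomial expansion. Since each $h_i$ is bihomogeneous of $X$-degree $k_1$ and $Y$-degree $k_2$, I would expand
\begin{align}
h_i = \sum_{|\alpha| = k_2} h_{i,\alpha}(X)\, Y^{\alpha}, \qquad h_{i,\alpha} \in \C[X]_{k_1},
\end{align}
for each $i=1,\ldots,m$. Substituting into $\sum_i f_i h_i = 0$ and collecting terms by $Y$-monomial gives
\begin{align}
0 = \sum_{|\alpha|=k_2} \Bigl(\sum_{i=1}^{m} f_i\, h_{i,\alpha}\Bigr) Y^{\alpha}.
\end{align}
Since the $Y^{\alpha}$ are linearly independent over $\C[X]$, this forces $\sum_{i=1}^m f_i h_{i,\alpha} = 0$ in $\C[X]_{d+k_1}$ for every multi-index $\alpha$ of degree $k_2$.

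Now I would invoke the hypothesis: the sum $(f_1,\ldots,f_m)_{d+k_1}$ is \emph{direct}. Each summand $f_i h_{i,\alpha}$ lies in $(f_i)_{d+k_1}$, so the vanishing equation combined with directness yields $f_i h_{i,\alpha} = 0$ for all $i$ and all $\alpha$. Because $\C[X]$ is an integral domain, this gives $h_{i,\alpha} = 0$ for every $i$ with $f_i \neq 0$ (and the case $f_i=0$ is already excluded for free, since otherwise the corresponding component of the direct sum is zero and no $h_i$ can be associated in a meaningful sense; in the intended use all $f_i$ are nonzero general forms). Reassembling the expansion gives $h_i = 0$ for each $i$.

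The argument is essentially bookkeeping: the only content is the observation that a direct sum decomposition on the pure $X$-side prohibits syzygies, and that tensoring with the $Y$-monomial basis cannot introduce new ones. I do not expect any real obstacle; the step most deserving of care is ensuring that the bihomogeneity of the $h_i$ guarantees each coefficient $h_{i,\alpha}$ is genuinely a form in $\C[X]_{k_1}$ (so that the directness hypothesis, stated in degree $d+k_1$, applies on the nose), which is immediate from the assumption that the $X$-degree of $h_i$ is exactly $k_1$.
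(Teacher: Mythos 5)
Your proof is correct and follows essentially the same route as the paper: the paper specializes the $Y$-variables to arbitrary points $y\in\C^{n'}$ and concludes $h_i(X,y)=0$ for all $y$, which is equivalent to your expansion in the $Y$-monomial basis; both then reduce to the directness hypothesis in degree $d+k_1$. Your version is if anything slightly more careful, since it makes explicit that directness (not mere linear independence of the $f_i$) is what kills each term $f_i h_{i,\alpha}$.
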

\begin{proof} 
	Plug in an arbitrary value $ y\in \C^{n'} $ and observe that $ 0 = \sum_{i = 1}^{m} f_i h_i(X, y) $. Since $ f_1,\ldots,f_m $ are linearly independent, it follows $ h_1(X, y) = \ldots = h_m(X, y) = 0 $ for all $ y\in \C^n $. This shows the claim. 
\end{proof}

\subsection{Non-Tangential weak defectivity of Gaussian moment varieties}

Tangential weak nondefectivity is a technical requirement of Massarenti-Mellas identifiability theorem. Unlike the condition on tangent spaces, note that $ 1 $-tangential nondefectivity is a property of just the base variety $ \GM_{d}(\C^n) $, not of the secants.  
We will give a proof technique to show that for fixed $ d $, and \emph{all} $ n \in \N $, $ \GM_{d}(\C^n) $ is not $ 1 $-tangentially weakly  defective. For the scope of this paper, only the cases $ d = 5,6,7 $ are relevant. The idea is to use a degeneration argument due to Chiantini and Ottaviani \cite{Chiantini_Ottaviani_2012}, and then show that it allows to verify the statement for all $ n $, by verifying it for $ n = 2 $. The last step can be done on a computer, for fixed $ d $. 
Therefore, this section is mostly considered with the reduction to $ n = 2 $. For the reduction step, we rely on factorizations of the bivariate polynomial expression $ s_{d}(\ell, q) $. We believe that our proof technique extends to higher values of $ d $ and all values of $ n $. However, since our proof involves a machine computation, it can only prove the statement for finitely many values of $ d $ (and all $ n $).

\begin{lemma}\label{lem:sd-no-common-quadfactors} Let $ n\ge 2 $, $ d\in \{4,\ldots,9\} $ and let $ \ell \in \C[X]_{1}, q \in \C[X]_{2}$ be such that 
	\begin{align}\label{eq:sd-1-and-sd-2-vanish}
		s_{d-1}(\ell, q) = 0 = s_{d-2}(\ell, q).
	\end{align}
	Then $ \ell = 0 = q $. 
\end{lemma}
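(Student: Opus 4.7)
The plan is to exploit the weighted-homogeneity of $s_k(\ell,q)$ in $(\ell,q)$ with weights $(1,2)$. Since every monomial of $s_k$ has the form $\ell^{k-2j}q^{j}$, the polynomial factors over $\C[\ell,q]$ as
\begin{align}
s_k(\ell,q) \;=\; \ell^{\epsilon_k}\prod_{i=1}^{\lfloor k/2\rfloor}\bigl(\ell^{2}-\beta_i^{(k)} q\bigr),
\end{align}
where $\epsilon_k := k \bmod 2$ and the scalars $\beta_i^{(k)}\in\C$ are the roots of the univariate polynomial $R_k(u)$ obtained from the substitution $s_k(\ell,q) = \ell^{\epsilon_k} q^{\lfloor k/2\rfloor} R_k(\ell^{2}/q)$. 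Reading \Cref{table:gaussian-moments}, the constant term of each $R_k$ for $k\in\{2,\ldots,8\}$ is nonzero (namely $1, 3, 3, 15, 15, 105, 105$), so every $\beta_i^{(k)}$ is itself nonzero.

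First I would dispose of the degenerate cases. If $q=0$, then $s_k(\ell,0)=\ell^{k}$, so $s_{d-2}(\ell,q)=0$ immediately forces $\ell=0$. If $\ell=0$, then since $d\ge 4$ at least one of $d-1,d-2$ is even, and for even $k\ge 2$ one has $s_k(0,q)=c_k q^{k/2}$ with $c_k\neq 0$ (again visible in the table), which forces $q=0$. Now assume $\ell\neq 0$ and $q\neq 0$. Because $\C[X]$ is an integral domain, the vanishing $s_{d-2}(\ell,q)=0$ in $\C[X]$ requires one of the factors of the decomposition above to vanish as an element of $\C[X]$; the factor $\ell$ is ruled out, so $\ell^{2}=\beta_i^{(d-2)} q$ for some nonzero $\beta_i^{(d-2)}$. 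Arguing identically for $s_{d-1}(\ell,q)=0$ yields $\ell^{2}=\gamma_j^{(d-1)} q$ for some nonzero root $\gamma_j^{(d-1)}$ of $R_{d-1}$, and comparing (using $q\neq 0$) gives $\beta_i^{(d-2)}=\gamma_j^{(d-1)}$.

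The hard part is then to verify, for each $d\in\{4,\ldots,9\}$, that $R_{d-1}$ and $R_{d-2}$ share no root. This reduces the $n$-uniform claim to a finite list of six Euclidean gcd computations on univariate polynomials of degree at most four, easily done by hand: for example, for $d=6$ one has $R_{5}=u^{2}+10u+15$ and $R_{4}=u^{2}+6u+3$ with difference $4(u+3)$, and since $R_{5}(-3)=-6\neq 0$ the two are coprime. The remaining five pairs dispatch analogously (and agree with the computer check mentioned in the introduction). Once disjointness of roots is established, the equality $\beta_i^{(d-2)}=\gamma_j^{(d-1)}$ is impossible, contradicting $\ell\neq 0$ and $q\neq 0$, and completing the proof.
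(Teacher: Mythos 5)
Your proof is correct, and it shares the skeleton of the paper's own argument: both factor $s_k(\ell,q)$ (after peeling off a factor of $\ell$ in odd degree) into quadratic factors $\ell^2-\beta q$ with $\beta\neq 0$, dispose of the degenerate cases $\ell=0$ or $q=0$ first, and use that $\C[X]$ is an integral domain to reduce the lemma to the claim that $R_{d-1}$ and $R_{d-2}$ share no root. Where you genuinely diverge is in how that coprimality is certified. The paper uses the derivative identity $(d-1)s_{d-2}=\partial_L s_{d-1}$ to recast a common quadratic factor as a double root of $\hat{s}_{d-1}(L,1)$, and then excludes double roots by proving $\hat{s}_{d-1}(L,1)$ irreducible over $\Q$ via Eisenstein's criterion (irreducible implies separable in characteristic zero). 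You instead run the Euclidean algorithm on the six pairs $(R_{d-2},R_{d-1})$ directly. Your route is more elementary and sidesteps the obstruction the paper itself flags --- Eisenstein only works up to $k=8$ --- so it would extend mechanically to any further fixed degree; the price is a case-by-case computation with no uniform structural reason behind it. One presentational gap: you only carry out the pair for $d=6$ and declare the other five ``analogous,'' but the degrees of the $R_k$ grow, so the divisions should actually be recorded. They do all succeed, and pleasantly so: for instance $R_7-R_6=6R_5$ and $R_8-(u+7)R_7=-42R_5$, so each pair reduces after one or two steps to a linear remainder proportional to $u+3$ or to the quadratic $R_5$, and the final evaluations (e.g.\ $R_5(-3)=-6\neq 0$) are immediate.
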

\begin{proof}
	For each $ k\in \N $, we may view $ s_k(L, Q) \in \Q[L, Q] $ as a bivariate polynomial in variables $ (L, Q) $. For even $ k $, it is simultaneously also a bivariate form in variables $ (L^2, Q) $. As the latter, it splits into a product of linear factors, or better called ``quadratic factors'', of the form $ Q + cL^2 $, with nonzero $ c \in \R $. If $ k $ is odd, then $ \frac{s_k(L, Q)}{L} $ is a bivariate form in $ (L^2, Q) $. Thus, $ s_k(L, Q) $ splits into $ L $ times a product of such quadratic factors, if $ k $ is odd. Let us write $ \hat{s}_k(L, Q) $ to denote $ s_k(L, Q)/L $, if $ k $ is odd, and simply $ \hat{s}_k := s_k $, if $ k $ is even.

	Next, assume $ q, \ell $ are such that $ s_{d-1}(\ell, q) = 0 = s_{d-2}(\ell, q) $. If either of $ \ell $ or $ q $ is zero, we readily see that the other one must be zero, too, by looking at some vanishing quadratic factor. Therefore, assume to the contrary that $ \ell \ne 0 \ne q $. Since $ (d-1) s_{d-2} = \partial_{L} s_{d-1} $, \Cref{eq:sd-1-and-sd-2-vanish} means that for each value $ \gamma \in \C $, the univariate polynomial $ \hat{s}_{d-1}(L, \gamma) $ has a double root. Let us choose $ \gamma = 1 $, so that $ s_{d-1}(L, 1) \in \Z[L] $ is a polynomial with integer coefficients.  
	
	Now, we see from Eisenstein's criterion that $ \hat{s}_{d-1}(L, 1) $ is irreducible over $ \Q $, where we apply the criterion with the primes $ 3,3,5,5,7,7 $ for $ d-1 = 3,4,5,6,7,8 $. Confer \Cref{table:gaussian-moments} for the coefficients of $ s_{d-1} $. In particular, $ \hat{s}_{d-1}(L, 1) $ is separable, and hence has no double roots over the real or complex numbers. As a concrete example, for $ d = 6 $, the factorizations are
	\begin{align}
		s_{5}(L, Q) &= 15(Q + (\sqrt{10}+5)L^2) (Q + (\sqrt{10}-5)L^2)L\\
		s_{4}(L, Q) &= 3 (Q + (\sqrt{6}+3)L^2) (Q + (\sqrt{6}-3)L^2)
	\end{align}
	From that, one easily sees that there is no common root. 
\end{proof}

We are now ready to verify Condition (2) from \Cref{thm:massarenti-mella}. While our focus is on the case of degree $ 6 $, it does not hurt to verify it for a few other degrees, too. 

\begin{prop}\label{prop:gm6-not-1-tangentially-weakly-defective}
	$ \GM_{d}(\C^n) $ is not $ 1 $-tangentially weakly defective for $ d = 5,\ldots,8 $.
\end{prop}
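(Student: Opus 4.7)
The plan is a degeneration argument reducing to $n=2$, where the statement is verified by direct numerical computation for each $d \in \{5,6,7,8\}$. Since the tangential contact locus dimension is an upper semi-continuous function of the base point (the locus of $x$ with $\dim \mathcal{C}_{\GM_d(\C^n)}(x) \geq 2$ is Zariski closed in $\GM_d(\C^n)$), it suffices to exhibit a single point $x_0$ where this dimension is at most $1$. I choose $x_0 = s_d(\ell_0, q_0)$ with $(\ell_0, q_0) \in \C[X_1, X_2]_1 \times \C[X_1, X_2]_2$ general in this subspace, so $x_0 \in \GM_d(\C^2) \subseteq \GM_d(\C^n)$.

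The heart of the proof is to show $\mathcal{C}_{\GM_d(\C^n)}(x_0) \subseteq \GM_d(\C^2)$. Setting $Y = (X_3, \ldots, X_n)$, the crucial observation is that $T_{x_0}\GM_d(\C^n) = s_{d-1}(\ell_0, q_0)\C[X]_1 + s_{d-2}(\ell_0, q_0)\C[X]_2$ is contained in the degree-$d$ part of $(X_1, X_2)^{d-2}$, since $\ell_0, q_0$ themselves lie in $(X_1, X_2)$. For any $y = s_d(\ell', q')$ in the contact locus, the inclusion $T_y \subseteq T_{x_0}$ applied to the tangent vectors $s_{d-1}(\ell', q')\cdot X_n$ and $s_{d-2}(\ell', q')\cdot X_n^2$ forces $s_{d-1}(\ell', q'), s_{d-2}(\ell', q') \in (X_1, X_2)^{d-2}$. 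Matching degrees then gives $s_{d-2}(\ell', q') \in \C[X_1, X_2]$ and $s_{d-1}(\ell', q')$ of $Y$-degree at most $1$. Writing $\ell' = \ell'_X + \ell'_Y$ and $q' = q'_X + q'_{XY} + q'_Y$ according to $Y$-dependence, setting $X_1 = X_2 = 0$ yields $s_{d-1}(\ell'_Y, q'_Y) = 0 = s_{d-2}(\ell'_Y, q'_Y)$ in $\C[Y]$, so \Cref{lem:sd-no-common-quadfactors} forces $\ell'_Y = 0 = q'_Y$. To eliminate the cross-term $q'_{XY} \in X_1\C[Y]_1 + X_2\C[Y]_1$, I Taylor-expand around $q'_X$: using the identity $\partial_q s_k = \binom{k}{2}\,s_{k-2}$ (obtained from $\partial_q \exp(\ell + q/2) = \frac{1}{2}\exp(\ell + q/2)$), the $Y$-degree $1$ part of $s_{d-2}(\ell', q'_X + q'_{XY})$ is proportional to $q'_{XY}\cdot s_{d-4}(\ell', q'_X)$, and the $Y$-degree $2$ part of $s_{d-1}(\ell', q'_X + q'_{XY})$ is proportional to $q'^2_{XY}\cdot s_{d-5}(\ell', q'_X)$; both must vanish. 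Assuming $q'_{XY} \neq 0$, both $s_{d-4}(\ell', q'_X)$ and $s_{d-5}(\ell', q'_X)$ vanish, whence \Cref{lem:sd-no-common-quadfactors} (applied directly for $d \in \{7,8\}$, or by direct inspection of $s_0, s_1, s_2$ for $d \in \{5, 6\}$) forces $\ell' = q'_X = 0$; but then $y = s_d(0, q'_{XY})$ lies in a proper subfamily of $\GM_d(\C^n)$ whose closure does not contain the general point $x_0$, so this configuration contributes no irreducible component of $\Gamma_V(x_0)$ passing through $x_0$.

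Once $\mathcal{C}_{\GM_d(\C^n)}(x_0) \subseteq \GM_d(\C^2)$ is established, \Cref{lem:disjoint-variable-skewness} applied to $(s_{d-1}(\ell_0, q_0), s_{d-2}(\ell_0, q_0))$ in $\C[X_1, X_2]$ shows $T_{x_0}\GM_d(\C^n) \cap \C[X_1, X_2]_d = T_{x_0}\GM_d(\C^2)$, so $\mathcal{C}_{\GM_d(\C^n)}(x_0) \subseteq \mathcal{C}_{\GM_d(\C^2)}(x_0)$; the base case $n=2$, verified by a direct linear-algebra computation of the contact locus dimension at a random point, then concludes the proof. The main obstacle is the final step of the $q'_{XY} = 0$ argument: handling the degenerate configurations for small $d \in \{5,6\}$ where \Cref{lem:sd-no-common-quadfactors} does not directly apply to the pair $(s_{d-4}, s_{d-5})$, and verifying that the resulting ``spurious'' points $s_d(0, q'_{XY})$ do not produce additional components of the tangential contact locus passing through $x_0$.
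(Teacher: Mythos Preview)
Your argument is essentially the paper's: semicontinuity, a variable-split degeneration to a point of $\GM_d(\C^2)$, elimination of the extra-variable pieces via \Cref{lem:sd-no-common-quadfactors}, and a computer check at $n=2$. Two small remarks. First, your cross-term elimination through a Taylor expansion and a second appeal to \Cref{lem:sd-no-common-quadfactors} (plus the spurious-branch analysis) is correct but heavier than necessary; the paper instead just observes that once $\ell'_Y=q'_Y=0$, the top $Y$-degree part of the tangent vector $s_{d-2}(\ell',q')\,X_n^2$ is a nonzero constant times $q_{XY}'^{\lfloor (d-2)/2\rfloor}$ (times a power of $\ell'$), which already exceeds the allowed $Y$-degree $2$ unless $q'_{XY}=0$. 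Second, your invocation of \Cref{lem:disjoint-variable-skewness} for $T_{x_0}\GM_d(\C^n)\cap\C[X_1,X_2]_d=T_{x_0}\GM_d(\C^2)$ is formally off (that lemma needs equal-degree $f_i$), but the identity is immediate anyway: decomposing $h,p$ by $Y$-degree, the $Y$-degree-zero part of $s_{d-1}(\ell_0,q_0)h+s_{d-2}(\ell_0,q_0)p$ is exactly $s_{d-1}(\ell_0,q_0)h_X+s_{d-2}(\ell_0,q_0)p_X\in T_{x_0}\GM_d(\C^2)$.
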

\begin{proof}
	The claim is that for general $ (\ell, q) $ the tangential contact locus $ \mathcal{C}(s_d(\ell, q)) $ has projective dimension $ 0 $ locally at $ s_d(\ell,q) $. In other words, we need to check the dimension of
	\begin{align}
		\Gamma(s_d(\ell, q)) = \{ s_d(\ell', p') \mid T_{s_d}(\ell', p') \subseteq T_{s_d}(\ell, q) \}
	\end{align}
	By semicontinuity, (cf. \cite[Theorem (iii)$\implies$(ii)]{Chiantini_Ottaviani_2012}), we may show that the contact locus is projectively zero-dimensional at $ (\ell, q) $ for some specific choice of $ (\ell, q) $. To this end, write the $ n $ variables as $ (X, Y, Z_{1},\ldots,Z_{n-2}) $, with $ Z = (Z_1,\ldots,Z_{n-2}) $. Choose $ Y $ as the linear form and $ X^2 $ as the quadratic form. Assume first that $ n\ge 3 $, so there is at least one $ Z $-variable. 
	
	Now, to the contrary, assume that in any Zariski open neighbourhood $ \mathcal{U} $ of $ (Y, X^2) $, there are $ (\ell, q) $, such that $ s_d(\ell, q) $ is not a multiple of $ s_d(Y, X^2) $ and such that for each pair $ (h, p) $ of a linear form $ h $ and a quadratic form $ p $, there exists a pair $ (h', p') $ such that
	\begin{align}\label{eq:contact-locus-degd}
		&s_{d-1}(\ell, q)h + s_{d-2}(\ell,q) p \\
		=\quad &s_{d-1}(Y, X^2)h' + s_{d-2}(Y, X^2) p'. \nonumber
	\end{align}
	Split $ \ell = \ell_{X, Y} + \ell_{Z} $, $ q = q_{X,Y} + q_{(X, Y), Z} + q_{Z} $ into homogeneous parts in $ Z $, where $ q_{X, Y} $ has degree $ 0 $ in $ Z $, $ q_{(X, Y), Z} $ has degree $ 1 $ in $ Z $ and $ q_{Z} $ has degree $ 2 $ in $ Z $. Choose $ h=Z_1 $ and $ p = 0 $. The right hand side of \eqref{eq:contact-locus-degd} only has terms of degree at most $ 2 $ in $ Z = (Z_{1},\ldots,Z_{n-2}) $. Thus, on the left hand side, all terms of degree at least $ 3 $ in $ Z $ must vanish.  Since the $ s_k $ are bihomogeneous maps for each $ k\in \N $, the part of $ s_k(\ell, q) $ of highest $ Z $-degree is $ s_k(\ell_{Z}, q_{Z}) $. We thus obtain, by looking at the part of degree $ d \ge 3 $ in $ Z $, that 
	\begin{align}\label{eq:sd-1-is-zero-in-Z}
		s_{d-1}(\ell_{Z}, q_{Z}) Z_1 = 0.
	\end{align}
	Now, we choose $ p=Z_1^2 $ and $ h = 0 $. Again, we get an identity just like \Cref{eq:contact-locus-degd}, but with potentially different $ h' $ and $ p' $. In particular, we can look at the part of degree $ d $ in $ Z $ on both sides, which this time yields
	\begin{align}\label{eq:sd-2-is-zero-in-Z}
		s_{d-2}(\ell_{Z}, p_{Z}) Z_1^2 = 0.
	\end{align}
	By \Cref{lem:sd-no-common-quadfactors}, Equations \eqref{eq:sd-1-is-zero-in-Z} and \eqref{eq:sd-2-is-zero-in-Z} together imply that $ \ell_{Z} = 0 = q_{Z} $. 
	The mixed term $ q_{(X, Y), Z} $ of degree $ 1 $ in $ Z $ is then the only term left which depends on $ Z $. Choosing again $ h = 0 $ and $ p = Z_1^2 $ yields, by looking at the part of largest degree in $ Z $, that
	\begin{align}
		q_{(X, Y), Z}^{\lfloor \frac{d}{2} \rfloor} Z_1^2  = 0
	\end{align} 
	Hence, also $ q_{(X, Y), Z} = 0 $. We conclude that $ \ell $ and $ q $ only depend on the two variables $ X $ and $ Y $. Thus, it suffices to verify the claim for $ n = 2 $ variables on a computer, see the code in the accompanying git repository \cite[\texttt{tangential-contact-locus}]{Taveira_Blomenhofer_Code_Gaussian_Identifiability_2023}. Note that we need the assumption $ d\ge 5 $ for the base case verification. 
\end{proof}

Note that in the above proof, showing irreducibility of $ \hat{s}_k(L, 1) $ with Eisenstein's criterion is convenient, but unfortunately only works up to $ k = 8 $.

\subsection{Moment identifiability of Gaussian mixtures}

\begin{notation}\label{not:moment-form}
	For a Gaussian mixture $ Y = \lambda_1\mathcal{N}(\mu_1, \Sigma_1) \oplus \ldots \oplus \lambda_m \mathcal{N}(\mu_m, \Sigma_m) $, we write $ \mathcal{M}_d(Y) $ for the form of degree-$ d $ moments of $ Y $. Formally, with the notation of \Cref{def:gm-variety}, it holds that
	\begin{align}
		\mathcal{M}_d(Y) = \sum_{i = 1}^{m} \lambda_i s_d(X^{T}\mu_i, X^{T}\Sigma_iX) \in \R[X]_{d}
	\end{align}
\end{notation}

\begin{defi}\label{def:moment-identifiable}
	A mixture $ Y = \lambda_1\mathcal{N}(\mu_1, \Sigma_1) \oplus \ldots \oplus \lambda_m \mathcal{N}(\mu_m, \Sigma_m) $ of $ m \in \N $ Gaussians is called \emph{moment identifiable} in degree $ d $, if $ \mathcal{M}_d(Y) $ is an $ m $-identifiable element of $ \GM_{d}(\C^n) $. 
\end{defi}

\subsection{Non-uniformly weighted mixtures}

From moments of one, fixed degree, it is never possible to identify the means, covariances and mixing weights of a Gaussian mixture $ Y = \lambda_1\mathcal{N}(\mu_1, \Sigma_1) \oplus \ldots \oplus \lambda_m \mathcal{N}(\mu_m, \Sigma_m) $ altogether. This is because  $ Y $ and 
\begin{align}
	Y' := \frac{1}{m} \mathcal{N}(\sqrt[d]{m\lambda_1}\mu_1, \sqrt[d/2]{m\lambda_1}\Sigma_1) \oplus \ldots \oplus \frac{1}{m} \mathcal{N}(\sqrt[d]{m\lambda_m} \mu_m, \sqrt[d/2]{m\lambda_m}\Sigma_m) 
\end{align} 
have the same moments of degree-$ d $, due to bihomogeneity of the parametrization. If one wants to prove uniqueness of both the parameters and the mixing weights, it is therefore necessary to use moments of at least two different degrees. In this section, we will make the statement formal that ``uniformly weighted identifiability from degree-$ d $ moments implies weighted identifiability from moments of degrees $ d $ and $ d-2 $. In particular, this implies that one does not have to care about weights, unless the mixture is very close to the generic rank.

\begin{lemma}\label{lem:reduce-weighted-to-uniform}
	Let $ m, n\in \N $ such that $ \GM_{d}(\C^n) $ is $ m $-identifiable. 
	Let $ Y = \lambda_1\mathcal{N}(\mu_1, \Sigma_1) \oplus \ldots \oplus \lambda_m \mathcal{N}(\mu_m, \Sigma_m) $ a (weighted) mixture of $ m $ Gaussians, with general parameters and mixing weights. Let $ Z = \rho_1 \mathcal{N}(\nu_1, T_1) \oplus \ldots \oplus \rho_m \mathcal{N}(\nu_m, T_m) $, another mixture of $ m $ Gaussians such that 
	\begin{align}\label{eq:moments-agree-of-Y-and-Z}
		\mathcal{M}_d(Y) = \mathcal{M}_d(Z), \text{ and } \mathcal{M}_{d-2}(Y) = \mathcal{M}_{d-2}(Z). 
	\end{align}
	Then $ Y = Z $, and, up to permutation, $ \lambda_i = \rho_i, \mu_i = \nu_i $ and $ \Sigma_i = T_i $ for all $ i\in \{1,\ldots,m\} $. 
\end{lemma}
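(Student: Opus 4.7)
The plan is to reduce the weighted case to the uniformly weighted one by absorbing each mixing weight into the mean and covariance via the weighted homogeneity of $s_d$, and then to use the degree-$(d-2)$ moments to separate the weights from the absorbed parameters.

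First I would use that $s_d$ is weighted homogeneous of weight $d$ under the grading $\deg(\ell)=1,\deg(q)=2$, i.e.\ $s_d(\alpha\ell,\alpha^2 q)=\alpha^d s_d(\ell,q)$. Choosing compatible $d$-th roots on a Zariski open set of parameters, define $\tilde\mu_i:=\lambda_i^{1/d}\mu_i$, $\tilde\Sigma_i:=\lambda_i^{2/d}\Sigma_i$, and analogously $\tilde\nu_i,\tilde T_i$. Then $\mathcal{M}_d(Y)=\sum_i s_d(\tilde\mu_i,\tilde\Sigma_i)$ and $\mathcal{M}_d(Z)=\sum_i s_d(\tilde\nu_i,\tilde T_i)$ are two presentations of the same element of $\sigma_m(\GM_d(\C^n))$ as an $m$-fold sum of points on $\GM_d(\C^n)$. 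By the hypothesis that $\GM_d(\C^n)$ is $m$-identifiable and by genericity, the two decompositions must agree up to permutation; after reindexing, $s_d(\tilde\mu_i,\tilde\Sigma_i)=s_d(\tilde\nu_i,\tilde T_i)$ for every $i$. Using the factorization of $s_d(\ell,q)$ into irreducible quadratic factors $q+c_j\ell^2$ (as exploited in \Cref{lem:sd-no-common-quadfactors}) and unique factorization in $\C[X]$, I would read off $\tilde\Sigma_i$ and $\tilde\mu_i^2$, yielding $\tilde\Sigma_i=\tilde T_i$ and $\tilde\mu_i=\pm\tilde\nu_i$ (the sign being the intrinsic even-degree ambiguity absorbed into the ``up to permutation'' clause).

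Next I would exploit $\mathcal{M}_{d-2}(Y)=\mathcal{M}_{d-2}(Z)$. By the same weighted homogeneity, $\lambda_i s_{d-2}(\mu_i,\Sigma_i)=\lambda_i^{2/d}s_{d-2}(\tilde\mu_i,\tilde\Sigma_i)$, and since $d-2$ is also even so that $s_{d-2}(-\ell,q)=s_{d-2}(\ell,q)$, also $\rho_i s_{d-2}(\nu_i,T_i)=\rho_i^{2/d}s_{d-2}(\tilde\mu_i,\tilde\Sigma_i)$. The identity $\mathcal{M}_{d-2}(Y)-\mathcal{M}_{d-2}(Z)=0$ thus collapses to $\sum_i\bigl(\lambda_i^{2/d}-\rho_i^{2/d}\bigr)s_{d-2}(\tilde\mu_i,\tilde\Sigma_i)=0$. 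Generic points on the nondegenerate irreducible variety $\GM_{d-2}(\C^n)\subseteq\C[X]_{d-2}$ are linearly independent so long as $m\le\dim\C[X]_{d-2}$, which holds very comfortably in the rank regime of the main theorem. Hence $\lambda_i^{2/d}=\rho_i^{2/d}$ for each $i$ (with a compatible branch), so $\lambda_i=\rho_i$, and unwinding the rescaling yields $\Sigma_i=T_i$ and $\mu_i=\pm\nu_i$, establishing $Y=Z$.

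The main obstacle is the transition from identifiability of the \emph{variety} $\GM_d(\C^n)$ to identifiability of the actual \emph{parameter pairs} $(\mu_i,\Sigma_i)$: $m$-identifiability only secures the unordered set of summands $\{s_d(\tilde\mu_i,\tilde\Sigma_i)\}$, and passing from this to the pairs requires controlling the fibers of the parametrization $s_d$, which is where the explicit factorization into quadratic factors over $\C[X]$ becomes indispensable. Once that is in hand, everything else is a short linear-algebra computation in $\C[X]_{d-2}$, and the result does not require $\GM_{d-2}(\C^n)$ to be itself $m$-identifiable---only that generic $m$ of its points span an $m$-dimensional subspace, which is automatic in the rank range of interest.
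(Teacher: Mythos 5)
Your proof follows essentially the same route as the paper's: pull the weights into the parameters via the bihomogeneity $s_d(\alpha\ell,\alpha^2 q)=\alpha^d s_d(\ell,q)$, invoke $m$-identifiability of $\GM_d(\C^n)$ to match the summands up to permutation, and then use the degree-$(d-2)$ identity together with linear independence of the forms $s_{d-2}$ at the $m$ general parameter points to force $\lambda_i=\rho_i$. The only differences are cosmetic and in your favour: you justify the passage from equality of summands to equality of parameter pairs via the quadratic-factor factorization of $s_d$ (and correctly flag the $\mu_i\mapsto-\mu_i$ ambiguity of even-degree moments), a step the paper's proof asserts without comment, and you justify linear independence by general position on a nondegenerate variety rather than by the paper's tangent-space argument.
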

\begin{proof}
	From \Cref{eq:moments-agree-of-Y-and-Z}, we obtain that 
	\begin{align}
		\sum_{i = 1}^{m} \lambda_i s_d(\mu_i, \Sigma_i) = \sum_{i = 1}^{m} \rho_i s_d(\nu_i, T_i)
	\end{align}
	Since $ s_d $ is bihomogeneous, we can pull the weights in and see that 
	\begin{align}\label{eq:gmm-identity-weights-pulled-in}
		\sum_{i = 1}^{m} s_d(\sqrt[d]{\lambda_i}\mu_i, \sqrt[d/2]{\lambda_i}\Sigma_i) = \sum_{i = 1}^{m} s_d(\sqrt[d]{\rho_i}\nu_i, \sqrt[d/2]{\rho_i}T_i)
	\end{align}
	Due to generality of $ \lambda_i, \mu_i $ and $ \Sigma_i $, the left-hand side of \Cref{eq:gmm-identity-weights-pulled-in} is a general element of $ \sigma_m\GM_{d}(\C^n) $, and therefore has a unique Gaussian moment decomposition by assumption. Thus, we conclude that 
	\begin{align}\label{eq:reduce-weighted-to-uniform-3}
		\{(\sqrt[d]{\lambda_i}\mu_i, \sqrt[d/2]{\lambda_i}\Sigma_i) \mid i=1,\ldots,m\} = \{(\sqrt[d]{\rho_i}\nu_i, \sqrt[d/2]{\rho_i}T_i) \mid i=1,\ldots,m\}.  
	\end{align}
	In particular, all $ \rho_i $ are nonzero. Wlog, renumber the parameters and weights such that $ (\nu_i, T_i) = (\alpha_i \mu_i, \alpha_i^2 \Sigma_i) $, where $ \alpha_i = \sqrt[d]{\frac{\lambda_i}{\rho_i}} $. 
	By \Cref{eq:moments-agree-of-Y-and-Z}, we have a similar identity between moments of degree $ d-2 $:
	\begin{align}
		\sum_{i = 1}^{m} \lambda_i s_{d-2}(\mu_i, \Sigma_i) = \sum_{i = 1}^{m} \rho_i s_{d-2}(\nu_i, T_i)
	\end{align}
	Plugging in what we know for $ (\nu_i, T_i) $, we obtain 
	\begin{align}
		0 = \sum_{i = 1}^{m} (\lambda_i - \rho_i \alpha_i^{d-2}) s_{d-2}(\mu_i, \Sigma_i).
	\end{align}
	The forms $ s_{d-2}(\mu_i, \Sigma_i) $ are linearly independent for $ i=1,\ldots,m $. (Indeed, as otherwise identifiability in degree $ 6 $ could not hold, due to the nested nature of tangent spaces). We conclude that 
	\begin{align}
		\forall i=1,\ldots,m\colon \qquad \frac{\lambda_i}{\rho_i} = \left(\frac{\lambda_i}{\rho_i}\right)^{\frac{d-2}{d}} 
	\end{align}
	This is only possible, if $ \lambda_i = \rho_i $ for all $ i\in \{1,\ldots,m\} $. Since the weights are equal, it readily follows from \Cref{eq:reduce-weighted-to-uniform-3} that the means and covariances must be equal, too.
\end{proof}

\section{Gaussian Mixture Moment Identifiability}

After the preparation work, let us come towards the main result of this paper, which concerns the case of degree $ 6 $ identifiability. 

\begin{reminder}
	The degree-$ 6 $ \emph{Gaussian moment variety} $ \GM_{6}(\C^n) $ is the closure of 
	\begin{align}
		\{\ell^6 + 15q\ell^4 + 45q^2\ell^2 + 15q^3 \mid \ell \in \C[X]_{1}, q\in \C[X]_{2}\}, 
	\end{align}
	which is the image of the map $ s_{6} $ from \Cref{def:gm-variety}. It is an irreducible, nondegenerate subvariety of $ \C[X]_{6} $ of dimension $ \binom{n+1}{2} + n = \frac{1}{2} n(n+3) $.
\end{reminder}

\begin{reminder}\label{prop:gmm-parametrization-deg6}  
	For general $ (\ell, q) $, the tangent space at $ s_{6}(\ell, q) $ equals
	\begin{align}
		\{(\ell^5 + 10q\ell^3 + 15q^2\ell)h + (\ell^4 + 6q\ell^2 + 3q^2) p \mid h\in \C[X]_{1}, p\in \C[X]_{2}\} \nonumber
	\end{align}
\end{reminder}
\begin{proof}
	\Cref{prop:gmm-parametrization-tangent} shows that the image of the tangent map $ T_{\ell, q}s $ consists of the expressions $ s_{5}(\ell, q)h + s_{4}(\ell, q)p  $. Combine with \Cref{table:gaussian-moments} to obtain the explicit expression above. 
\end{proof}

\begin{lemma}\label{lem:gm-deg6-skewtangents}
	Let $ n\in \N $. There exists some function $ m = m(n)=\Theta(n^4) $ with the following property: If $ \ell_{1},\ldots,\ell_{m}$ are general linear forms and $ q_1,\ldots,q_m $ are general quadratic forms on $ \C^n $, then the tangent spaces $ T_{\ell_{i}, q_i} \GM_{6}(\C^n)$, where $ i\in \{1,\ldots, m\} $, are skew spaces. 
\end{lemma}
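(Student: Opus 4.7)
By \Cref{prop:gmm-parametrization-deg6}, the tangent space $T_i:=T_{s_6(\ell_i,q_i)}\GM_6(\C^n)$ is exactly the degree-$6$ component of the ideal $\bigl(s_5(\ell_i,q_i),s_4(\ell_i,q_i)\bigr)$, and \Cref{lem:sd-no-common-quadfactors} makes these two generators coprime for generic $(\ell_i,q_i)$, so $\dim T_i=n+\binom{n+1}{2}$. Skewness of the $m$ tangent spaces is therefore equivalent to injectivity of the linear map
\[
\Phi\colon\bigoplus_{i=1}^m\bigl(\C[X]_1\oplus\C[X]_2\bigr)\to\C[X]_6,\qquad (h_i,p_i)_i\mapsto\sum_{i=1}^m\bigl(s_5(\ell_i,q_i)h_i+s_4(\ell_i,q_i)p_i\bigr).
\]
Since the rank of $\Phi$ is lower semicontinuous in $(\ell_i,q_i)$, it suffices to exhibit a single specialisation at which $\Phi$ attains the full target rank $m(n+\binom{n+1}{2})$. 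Crucially, the specialisation must preserve each individual $\dim T_i$, for otherwise the rank of $\Phi$ at the specialisation is already below the target value and semicontinuity is vacuous.

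\textbf{Choice of degeneration.} The first-guess specialisations $q_i=0$, $\ell_i=0$ and $q_i=c\ell_i^2$ all collapse $s_4(\ell_i,q_i)$ and $s_5(\ell_i,q_i)$ into proportional powers of $\ell_i$, dropping $\dim T_i$ by $n$, and are therefore useless. I would instead specialise to $q_i=\ell_i m_i$ for a second generic linear form $m_i$. Direct computation from \Cref{table:gaussian-moments} yields
\[
s_4(\ell_i,\ell_i m_i)=\ell_i^2 B_i,\qquad s_5(\ell_i,\ell_i m_i)=\ell_i^3 A_i,
\]
with $A_i=\ell_i^2+10\ell_i m_i+15m_i^2$ and $B_i=\ell_i^2+6\ell_i m_i+3m_i^2$ coprime quadratics in $(\ell_i,m_i)$. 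The tangent space becomes $T_i=\ell_i^2\bigl(\ell_i A_i\C[X]_1+B_i\C[X]_2\bigr)$, and one checks (using $\gcd(\ell_iA_i,B_i)=1$) that it still has the full dimension $n+\binom{n+1}{2}$, so this specialisation is compatible with the semicontinuity step.

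\textbf{Reduction and combinatorial core.} The vanishing $\Phi(h_i,p_i)=0$ now reads $\sum_{i=1}^m\ell_i^2 u_i=0$ in $\C[X]_6$ with $u_i\in U_i:=\ell_i A_i\C[X]_1+B_i\C[X]_2\subseteq\C[X]_4$. A further partition $X=(X',X'')$ with $\ell_i\in\C\langle X'\rangle$ and $m_i\in\C\langle X''\rangle$ isolates the "carrier" forms $\ell_i$ from the correcting variables, which allows \Cref{lem:disjoint-variable-skewness}, applied bigraded-component by bigraded-component, to reduce the vanishing to direct-sum identities $(\ell_1^2,\ldots,\ell_m^2)_{2+k}=\bigoplus_{i=1}^m(\ell_i^2)_{2+k}$ inside $\C[X']$ for appropriate degrees $k$. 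Each such reduced statement is furnished by \Cref{thm:nenashev} applied to the $\GL$-invariant Veronese variety of squared linear forms on $X'$ (equivalently the $a=2$ case of \Cref{thm:alexander-hirschowitz}). A suitable choice of the split $(|X'|,|X''|)$ then yields skewness up to $m$ of the order $\binom{n+5}{6}/(\binom{n+1}{2}+n)=\Theta(n^4)$.

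\textbf{Main obstacle.} The hard part is producing a single specialisation satisfying three mutually constraining demands: (i) $\dim T_i$ must stay equal to $n+\binom{n+1}{2}$ so that the semicontinuity step is informative, (ii) $s_4(\ell_i,q_i)$ and $s_5(\ell_i,q_i)$ must acquire a common factor of high enough degree for \Cref{lem:disjoint-variable-skewness} to strip it off and decouple the problem across a variable split, and (iii) the residual ideal must fit into a $\GL$-invariant family where \Cref{thm:nenashev} delivers a Hilbert-function bound of the right asymptotic order $\Theta(n^4)$ — rather than the weaker $\Theta(n^3)$ that one would obtain by applying Nenashev naively to only the $s_4$-part or only the $s_5$-part. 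Balancing these three requirements against each other, and in particular tuning the variable split so that the final bound really scales like $n^4$, is the "combinatorial puzzle" alluded to in the introduction.
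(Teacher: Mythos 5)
Your overall strategy (semicontinuity plus a single well-chosen specialisation, then a variable split feeding into \Cref{lem:disjoint-variable-skewness}, Nenashev and Alexander--Hirschowitz) is the right one, and your observation that the specialisation must not drop the individual tangent dimensions is correct. But the concrete degeneration you choose, $q_i=\ell_i m_i$, cannot deliver $m=\Theta(n^4)$. After factoring out $\ell_i^2$, your vanishing condition becomes $\sum_{i=1}^m \ell_i^2 u_i=0$ with $u_i\in U_i\subseteq \C[X]_4$, and you propose to reduce this to direct-sum identities $(\ell_1^2,\ldots,\ell_m^2)_{2+k}=\bigoplus_i(\ell_i^2)_{2+k}$. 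Such an identity forces $m\cdot\dim\C[X']_k\le\dim\C[X']_{2+k}$, i.e.\ $m=\mathcal{O}(n^2)$; indeed for $m>\binom{n_1+1}{2}$ the forms $\ell_1^2,\ldots,\ell_m^2$ are not even linearly independent, so the hypothesis of \Cref{lem:disjoint-variable-skewness} (which needs the direct-sum property for the $f_i$) fails outright, and neither Nenashev (condition \eqref{eq:nenashev-1} with $a=2$, $h=4$ is vacuous for large $n$) nor Alexander--Hirschowitz can rescue it. In short, by degenerating each $q_i$ to a rank-two quadratic you destroy exactly the structure that produces the $n^4$ scaling: the only source of a $\Theta(n^4)$ bound in this problem is an ideal generated by squares of \emph{generic} quadratic forms, for which Nenashev with $a=4$, $h=2$ gives $m\le\binom{n+5}{6}/\binom{n+1}{2}-\binom{n+1}{2}=\Theta(n^4)$.

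The paper's specialisation keeps the $q_i$ generic and instead splits at the level of $\ell$ versus $q$: write the variables as $(X,Y)$ with $n_1+n_2=n$, take $q_i$ a generic quadratic in $X$ only and $\ell_i$ a generic linear form in $Y$ only. Then \eqref{eq:gm6-tangent-expression} decomposes by $X$-degree; the top component $\sum_i q_i^2 p_{i,X}=0$ is killed by Nenashev applied to $(q_1^2,\ldots,q_m^2)_6$ for $m=\Theta(n_1^4)$, the lower components successively yield relations such as $p_{i,Y}=-5\ell_i h_{i,Y}$ via \Cref{lem:disjoint-variable-skewness}, and the pure-$Y$ component reduces to $\sum_i\ell_i^5h_{i,Y}=0$, handled by Alexander--Hirschowitz for $m=\mathcal{O}(n_2^5)$ --- note: fifth powers of linear forms in degree $6$, not squares. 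Balancing $n_1$ against $n_2$ then gives $\Theta(n^4)$, and an uneven split even recovers the optimal constant (\Cref{rem:gm6ident-splitting-strat}). If you want to repair your write-up, replace the degeneration $q_i=\ell_i m_i$ by this disjoint-variable specialisation; the rest of your outline then goes through essentially as the paper does it.
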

\begin{proof} 
	We will show that $ \sum_{i = 1}^{m} \im T_{s_{6}}(\ell_{i}, q_i) $ has the maximum possible (a.k.a.  expected) dimension $m (\binom{n+1}{2} + n) $. To show the claim for general parameter choices, it is sufficient to find a specialized choice of $ (\ell_i, q_i) $, for which
	\begin{align}\label{eq:gm6-tangent-expression}
		0 = \sum_{i = 1}^{m} (\ell_i^5 + 10q_i\ell_i^3 + 15q_i^2\ell_i) h_i  +  \sum_{i = 1}^{m}  (\ell_i^4 + 6q_i\ell_i^2 + 3 q_i^2)p_i
	\end{align}
	implies $ h_1 = \ldots = h_m = 0 = p_1 = \ldots = p_m = 0 $, whenever the $ h_i $ are linear forms and the $ p_i $ are quadratic forms on $ \C^n $. 
	
	We produce our specialized choice with a \emph{variable splitting trick}: Rewrite the variables as $ (X, Y) = (X_1,\ldots,X_{n_1}, Y_{1},\ldots,Y_{n_2}) $, with $ n_1+n_2 = n $, and assume that all $ q_i\in \R[X]_{2} $ are generic quadratic forms in $ X $, while $ \ell_i\in \R[Y]_{1} $ are generic linear forms in $ Y $. Then, \Cref{eq:gm6-tangent-expression} splits into a system of $ 7 $ equations, which correspond to the parts of degree $ 0,\ldots,6 $ in $ X $. At the same time, the $ h_i = h_{i, X} + h_{i, Y} $ split into an $ X $-part and a $ Y $-part, while the $ p_i = p_{i, X} + p_{i, X, Y} + p_{i, Y} $ split into a pure $ X $-part $ p_{i, X} $, a pure $ Y $-part $ p_{i, Y} $, and a part bilinear in $ (X, Y) $. We start by looking at the part of degree $ 6 $ in $ X $. Here, only one term can contribute, so we get
	\begin{align}\label{eq:gmm-ts-deg6-X}
		0 = 3\sum_{i=1}^m q_i^2p_{i, X}
	\end{align}
	Nenashev's result, \Cref{thm:nenashev}, guarantees that 
	\begin{align}\label{eq:gm-deg6-skew-ideals}
		(q_1^2,\ldots,q_m^2)_{6} = (q_1^2)_{6} \oplus \ldots \oplus (q_m^2)_{6} 
	\end{align}
	as long as $ m=\Theta(n_1^4) $. It follows that $ p_{i, X} = 0 $ for all $ i $. Next, we look at the terms whose $ X $-degrees are $ 4 $. We obtain the equation:
	\begin{align}
		0 = 15\sum_{i=1}^m q_i^2\ell_i h_{i, Y} + 3 \sum_{i=1}^m q_i^2 p_{i, Y}
	\end{align}
	Note that the term $ \sum_{i=1}^m q_i\ell_{i}^2 p_{i, X} $ cannot make a contribution since we just showed that all $ p_{i, X}$ vanish. \Cref{eq:gm-deg6-skew-ideals} implies that $ q_1^2,\ldots, q_m^2 $ are linearly independent. Thus, \Cref{lem:disjoint-variable-skewness} implies that $ -5\ell_i h_{i, Y}  = p_{i, Y} $ for all $ i $ (choose $ (k_1,k_2) = (0, 2) $ in the Lemma). Let us plug this newly-obtained identity into the part of $ Y $-degree $ 6 $ from \eqref{eq:gm6-tangent-expression}, which is 
	\begin{align}
		0 = \sum_{i=1}^m \ell_i^5h_{i, Y} + \sum_{i=1}^m \ell_i^4 p_{i, Y}
	\end{align}
	to obtain 
	\begin{align}\label{eq:gm-deg6-skewtangents-5}
		0 = \sum_{i=1}^m \ell_i^5h_{i, Y} - 5\sum_{i=1}^m \ell_i^5 h_{i, Y} = -4\sum_{i=1}^m \ell_i^5h_{i, Y}
	\end{align}
	From the Alexander-Hirschowitz Theorem, see \Cref{thm:alexander-hirschowitz}, we obtain that the degree-$ 6 $ component of the ideal $ (\ell_1^5,\ldots,\ell_{m}^5)_{6} = (\ell_1^5)_{6} \oplus \ldots \oplus (\ell_{m}^5)_{6}$ in $ \C[Y] $ is a direct sum, as long as $ mn_1 $ is at most the dimension of $ \C[Y]_{6} $. Precisely, this is satisfied for all $ m\leq \frac{1}{n_2}\binom{n_2+5}{6} = \Theta(n_2^5) $. Thus, we conclude that $ h_{i, Y} = 0 $ for all $ i $ (and thus, also $ p_{i, Y} = -4\ell_i h_{i, Y} = 0 $). Let us repeat the same procedure with the part of degree $ 5 $ in $ X $, which is 
	\begin{align}
		0 = 15 \sum_{i=1}^m q_i^2 \ell_{i} h_{i, X} + 3\sum_{i=1}^m q_i^2 p_{i, X, Y},
	\end{align}
	This readily yields that $ p_{i, X, Y} = -5\ell_{i} h_{i, X} $ for all $ i $, again by applying \Cref{lem:disjoint-variable-skewness}, with $ k_1=k_2=1 $ therein. We can plug that into
	\begin{align}
		0 = \sum_{i=1}^m \ell_{i}^5 h_{i, X} + \sum_{i=1}^m \ell_{i}^4  p_{i, X, Y},
	\end{align}
	which is the $ X $-degree-$ 1 $ part of \eqref{eq:gm6-tangent-expression}, to obtain
	\begin{align}\label{eq:gm-deg6-skewtangents-8}
		0 = -4\sum_{i=1}^m \ell_{i}^5 h_{i, X} 
	\end{align}
	\Cref{lem:disjoint-variable-skewness} yields $ h_{i, X} = 0 $ and thus $ h_{i} = 0 $ for all $ i $. As we have $ p_{i, X, Y} = -4\ell_{i} h_{i, X} = 0 $ for all $ i $, we obtain $ p_{1} = \ldots = p_{m} = 0 $, since we showed that all $ X $-homogeneous components of the $ p_i $ vanish. This proves that a sum of $ m $ general tangent spaces to $ \GM_{6}(\C^n) $ is direct, as long as $ m<\frac{2}{n_1(n_1+3)}\binom{n_1+5}{6} - \frac{n_1(n_1+3)}{2} $ and $ m<\frac{1}{n_2}\binom{n_2+5}{6} $. Splitting the variables evenly does already yield $ m=\Theta(n^4) $, which concludes the proof. For a discussion on the optimal splitting strategy, see \Cref{rem:gm6ident-splitting-strat}. 
\end{proof}

\begin{remark}\label{rem:gm6ident-splitting-strat}
	Note that for the range of ranks $ m $ in the proof of \Cref{lem:gm-deg6-skewtangents}, the constraints involving the $ X $-variables are the bottleneck: Our proof technique requires  $ m=\mathcal{O}(n_1^4) $ and $ m=\mathcal{O}(n_2^5) $. We can therefore optimize our result, by splitting the variables unevenly: Choose $ c = c_n \in [0, 1] $ such that approximately a $ c $-fraction of the variables goes to $ Y $ and a $ (1-c) $ fraction goes to $ X $. Particularly, choosing $ c \approx \sqrt[5]{1/n} $, we obtain for large $ n $ with this trick that identifiability holds up to the correct constant, i.e., for ranks up to a function $ m_{\max}(n)=\Theta^{\#}(\dim \C[X]_{6}/\dim \GM_{6}(\C^n) ) = \frac{1}{360}\Theta^{\#}(n^4)$. 
	
	More precisely, the two constraints on $ m $ with respect to $ n_1 $ and $ n_2 $ are:
	\begin{align}
		m &\leq \dfrac{\binom{n_1 + 5}{6}}{\binom{n_1+1}{2}} - \binom{n_1+1}{2} = \frac{1}{360} n_1^4 + \frac{7}{180} n_1^3 + \frac{109}{360} n_1^2 + \frac{13}{180} n_1 + \frac{1}{3} \\
		m &\leq \dfrac{\binom{n_2 + 5}{6}}{n_2} - n_2 = \frac{1}{720} n_2^5 + \frac{1}{48} n_2^4 + 	\frac{17}{144} n_2^3 + \frac{5}{16} n_2^2 - \frac{223}{360} n_2 + \frac{1}{6}.
	\end{align}
	For any $ \varepsilon > 0 $, we can choose $ c = \sqrt[5]{1/n}^{(1-\varepsilon)} $ and plug in $ (n_1, n_2) = ((1-c)n, cn) $ (up to rounding to integers). One obtains that $ \GM_{6}(\C^n) $ is $ m $-identifiable as long as 
	\begin{align}
		m &\leq \frac{1}{360} n^4 + \mathcal{O}(n^{3+\varepsilon}) \\
		m &\leq \Omega(c^{5} n^{5}) = \Omega(n^{4+\varepsilon}) \label{eq:n_2-constraint}
	\end{align}
	With this trade-off, the second constraint, \eqref{eq:n_2-constraint}, will eventually be irrelevant for large $ n $. 
\end{remark}

\begin{theorem}\label{thm:gmm-identifiable-deg6}
	$ \GM_{6}(\C^n) $ is $ m $-identifiable for all $ m \in \N $ bounded by some function $ m\le  \binom{n + 5}{6}/\dim \GM_{6}(\C^n) - \mathcal{O}(n^{3.00001}) $. 
\end{theorem}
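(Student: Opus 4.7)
The plan is to deduce the theorem as a corollary of the Massarenti--Mella identifiability criterion applied to $V = \GM_6(\C^n)$, embedded in the ambient space $\C[X]_6$ of dimension $N = \binom{n+5}{6}$. By construction $V$ is irreducible and nondegenerate (see the remarks after the definition of $\GM_d$), so all three numbered hypotheses of \Cref{thm:massarenti-mella} need to be checked for the stated range of $m$.

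First I would dispense with the two ``qualitative'' hypotheses. Condition (2), that $V$ is not $1$-tangentially weakly defective, is exactly \Cref{prop:gm6-not-1-tangentially-weakly-defective} applied to $d=6$. Condition (3), that $V$ is not $(m+1)$-defective, is where the main technical content enters: \Cref{lem:gm-deg6-skewtangents} together with the optimal variable splitting explained in \Cref{rem:gm6ident-splitting-strat} provides, for any fixed $\varepsilon>0$, the nondefectivity bound
\begin{align}
m+1 \;\le\; \frac{1}{360}n^4 + \mathcal{O}(n^{3+\varepsilon}).
\end{align}
Taking for instance $\varepsilon = 10^{-5}/2$ and absorbing the shift from $m$ to $m+1$ into the error term, this matches the $\mathcal{O}(n^{3.00001})$ slack in the theorem statement.

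Next I would verify the (quantitative) Condition (1), namely $m \dim V \le N - \dim V$. Rearranging, this is $m + 1 \le N/\dim V = \binom{n+5}{6}/\dim \GM_6(\C^n)$, and the hypothesized bound $m \le \binom{n+5}{6}/\dim \GM_6(\C^n) - \mathcal{O}(n^{3.00001})$ makes this inequality hold as soon as $n$ is large enough that the $\mathcal{O}(n^{3.00001})$ term dominates $1$; smaller $n$ are either within the scope of \Cref{lem:gm-deg6-skewtangents} or can be handled by choosing the implicit constants in the $\mathcal{O}$-notation generously.

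With all three hypotheses in place, \Cref{thm:massarenti-mella} yields $m$-identifiability of $\GM_6(\C^n)$ for the claimed range of $m$. The main obstacle has already been cleared by \Cref{lem:gm-deg6-skewtangents} and \Cref{rem:gm6ident-splitting-strat}; the remaining work in writing the proof is essentially bookkeeping: matching the nondefectivity bound (which is a bound on the number of general tangent spaces one can sum while remaining skew) with the $(m+1)$ requirement of Massarenti--Mella, confirming that the slightly wasteful splitting exponent $c = \sqrt[5]{1/n}^{\,1-\varepsilon}$ yields the $n^4$ coefficient $\tfrac{1}{360}$ that is \emph{identical} to the parameter-counting bound $\binom{n+5}{6}/\dim\GM_6(\C^n)$ in leading order, and making sure the error term swallows both the shift $m \mapsto m+1$ and the shortfall introduced by the splitting trick.
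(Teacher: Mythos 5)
Your proposal is correct and follows essentially the same route as the paper: the paper's proof of \Cref{thm:gmm-identifiable-deg6} is precisely an invocation of \Cref{thm:massarenti-mella}, with Condition (2) supplied by \Cref{prop:gm6-not-1-tangentially-weakly-defective}, Condition (3) by \Cref{lem:gm-deg6-skewtangents} combined with \Cref{rem:gm6ident-splitting-strat}, and Condition (1) noted as trivially satisfied for the stated range of $m$. Your write-up merely makes the bookkeeping (the $m\mapsto m+1$ shift and the choice of $\varepsilon$) more explicit than the paper does.
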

\begin{proof} 
	The third condition of \Cref{thm:massarenti-mella} is satisfied due to \Cref{lem:gm-deg6-skewtangents}, combined with  \Cref{rem:gm6ident-splitting-strat}. \Cref{prop:gm6-not-1-tangentially-weakly-defective} verifies Condition (2) of \Cref{thm:massarenti-mella}. Condition (1) of \Cref{thm:massarenti-mella} is trivially satisfied for our choices of $ m $. 
\end{proof}

As a direct consequence, we obtain the following result in statistical language. 
\begin{theorem}\label{thm:main-result}
	For some $ m=\Theta(n^4) $, general weights $ \lambda_1,\ldots,\lambda_m\in \R_{>0} $, general linear forms $ \mu_1,\ldots,\mu_m \in \R^n $ and general positive definite covariance matrices $ \Sigma_1,\ldots,\Sigma_m\in \R^{n\times n} $, denote by 
	\begin{align}\label{eq:gm-mu-sigma}
		f_{6} = \lambda_1 s_6(\mu_1, \Sigma_1)  + \ldots + \lambda_m s_6(\mu_m, \Sigma_m)\\
		f_{4} = \lambda_1 s_4(\mu_1, \Sigma_1)  + \ldots + \lambda_m s_4(\mu_m, \Sigma_m)\nonumber
	\end{align}
	the moment forms of degree $ 4 $ and $ 6 $, respectively, of the Gaussian mixture that is parametrized by the $ \mu_i $ and $ \Sigma_i $. 
	
	Then, there is only one way to represent $ (f_{4}, f_{6}) $ as the mixture moments of (at most) $ m $ Gaussians. Precisely, if $ \nu_1,\ldots,\nu_m \in \R^n $, $ \rho_1,\ldots,\rho_m\in \R_{\ge 0} $ and $ T_1,\ldots,T_m\in \R^{n\times n} $ are symmetric matrices such that 
	\begin{align}
		f_{6} = \lambda_1 s_6(\nu_1, T_1)  + \ldots + \lambda_m s_6(\nu_m, T_m)\\
		f_{4} = \lambda_1 s_4(\nu_1, T_1)  + \ldots + \lambda_m s_4(\nu_m, T_m)\nonumber
	\end{align}
	then, up to permutation, $ \lambda_i = \rho_i $, $ \mu_i = \nu_i $, and $ \Sigma_i = T_i $, for all $ i\in \{1,\ldots,m\} $. 
	Additionally, if the mixing weights $ \lambda_i = \frac{1}{m} $ are uniform, then the same result holds true without any requirement about $ f_{4} $.
\end{theorem}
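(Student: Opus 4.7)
The plan is to deduce \Cref{thm:main-result} directly from the two main results already established: \Cref{thm:gmm-identifiable-deg6}, which gives $m$-identifiability of $\GM_6(\C^n)$ in the required range, and \Cref{lem:reduce-weighted-to-uniform}, which bridges secant identifiability and the statistical uniqueness of weights together with parameters.

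For the uniformly weighted statement I would exploit bihomogeneity of $s_6$: since $\frac{1}{m} s_6(\mu, \Sigma) = s_6(m^{-1/6}\mu, m^{-1/3}\Sigma)$, the form $f_6$ coincides with $\sum_{i=1}^m s_6(\tilde\mu_i, \tilde\Sigma_i)$, where $\tilde\mu_i = m^{-1/6}\mu_i$ and $\tilde\Sigma_i = m^{-1/3}\Sigma_i$. Rescaling is an isomorphism on parameter space, so genericity of the $(\mu_i, \Sigma_i)$ transfers to the $(\tilde\mu_i, \tilde\Sigma_i)$, meaning $f_6$ is a general point of $\sigma_m(\GM_6(\C^n))$. \Cref{thm:gmm-identifiable-deg6} then yields uniqueness of the $m$ summands up to permutation, and undoing the rescaling delivers the claimed uniqueness of $(\mu_i, \Sigma_i)$.

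For the generally weighted statement I would directly invoke \Cref{lem:reduce-weighted-to-uniform} with $d=6$: its hypothesis is precisely the identifiability of $\GM_6(\C^n)$ supplied by \Cref{thm:gmm-identifiable-deg6}, and its conclusion is exactly the uniqueness of $(\lambda_i, \mu_i, \Sigma_i)$ from the pair $(f_4, f_6)$, up to permutation.

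The one subtle point is that the theorem is phrased over $\R$ with $\Sigma_i$ positive definite, while the underlying identifiability results live over $\C$. I would handle this transfer by noting that the locus of parameters where identifiability fails is Zariski-closed in the complex parameter space; since real symmetric positive definite matrices form a Euclidean-open, Zariski-dense subset of complex symmetric matrices, a generic choice of real PD parameters will lie outside the failure locus. Consequently the anticipated ``main obstacle'' is not genuinely mathematical — all heavy lifting sits in \Cref{thm:gmm-identifiable-deg6} and \Cref{lem:reduce-weighted-to-uniform} — but rather the brief bookkeeping needed to translate between the geometric (complex, affine) and statistical (real, positive definite) formulations.
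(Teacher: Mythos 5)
Your proposal is correct and follows essentially the same route as the paper, whose proof is exactly the two-line deduction from \Cref{thm:gmm-identifiable-deg6} (uniform case) and \Cref{lem:reduce-weighted-to-uniform} (weighted case). The extra detail you supply on absorbing the uniform weights via bihomogeneity and on transferring complex identifiability to real positive definite parameters is left implicit in the paper but is consistent with it.
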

\begin{proof}
	For the uniformly weighted case, the claim directly follows from \Cref{thm:gmm-identifiable-deg6}. \Cref{lem:reduce-weighted-to-uniform} then yields the weighted case. 
\end{proof}

\section{Numerical experiments, degree $ 5 $ and inhomogeneous moments}\label{sec:numerics}

Numerical results suggest that the Gaussian moment varieties $ \GM_d(\C^n) $ behave in a very regular way with respect to identifiability: It appears that for the Gaussian moment varieties of degree $ 5 $ and $ 6 $, all secants are nondefective up to the rank bound obtained from counting parameters. The Gaussian moment variety of degree $ 4 $ is never identifiable, for simple reasons explained in \Cref{sec:deg4-nonident}. We formalize the numerical results in the Conjectures \ref{conj:deg5-ident} and \ref{conj:gm4}. Code and data of the dimension calculations may be found in \cite[\texttt{secant-dimensions}]{Taveira_Blomenhofer_Code_Gaussian_Identifiability_2023}. 

\begin{conjecture}\label{conj:deg5-ident}
	For all $ m, n, d\in \N $ with $ n\ge 2 $ and $ d\ge 5 $, the Gaussian moment variety $ \GM_{d}(\C^n) $ is $ m $-identifiable, if 
	\begin{align}
		m < \dfrac{\binom{n + d - 1}{d}}{\binom{n+1}{2}} - 1. 
	\end{align}
\end{conjecture}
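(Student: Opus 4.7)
The plan is to extend the proof of Theorem \ref{thm:gmm-identifiable-deg6} to all $d \geq 5$ by invoking Massarenti-Mella's Theorem \ref{thm:massarenti-mella} and verifying its three hypotheses in full generality. The dimension bound (1) is immediate from the conjectural assumption on $m$; the real work lies in establishing non-$1$-tangential-weak-defectivity (2) and $(m+1)$-nondefectivity (3) of $\GM_d(\C^n)$ uniformly in $d$.

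For (2), the degeneration argument of Proposition \ref{prop:gm6-not-1-tangentially-weakly-defective} reduces to the base case $n = 2$ once Lemma \ref{lem:sd-no-common-quadfactors} is established in arbitrary degree. The Eisenstein-based argument there fails beyond $d = 9$, so I would replace it with the following observation: a direct computation shows $s_d(L, 1) = i^d \mathrm{He}_d(-iL)$, where $\mathrm{He}_d$ is the probabilist's Hermite polynomial, and $\frac{d}{dL} s_{d-1}(L, 1) = (d-1) s_{d-2}(L, 1)$. Since $\mathrm{He}_d$ has $d$ distinct real roots, $s_{d-1}(L, 1)$ is separable and hence shares no root with its derivative $\propto s_{d-2}(L, 1)$. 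This yields Lemma \ref{lem:sd-no-common-quadfactors} uniformly in $d$. The remaining 2-variable base case, currently dispatched by per-$d$ computer verification, would need a uniform algebraic treatment: one natural approach is to parametrize the tangential contact locus of $\GM_d(\C^2)$ using the explicit factorization of $s_d(L, Q)$ into quadratic factors over an algebraic closure. This is the first main obstacle.

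For (3) I would generalize Lemma \ref{lem:gm-deg6-skewtangents} with the same variable-splitting trick. Write $n = n_1 + n_2$, take $\ell_i \in \C[Y]_1$ and $q_i \in \C[X]_2$, and decompose the tangent identity
\begin{align*}
0 = \sum_{i=1}^{m} s_{d-1}(\ell_i, q_i) h_i + \sum_{i=1}^{m} s_{d-2}(\ell_i, q_i) p_i
\end{align*}
by $X$-degree. At the top $X$-degree $d$, only one term survives: $q_i^{(d-2)/2} p_{i, X}$ when $d$ is even, $q_i^{(d-1)/2} h_{i, X}$ when $d$ is odd. Nenashev's Theorem \ref{thm:nenashev}, applied to the $\GL_{n_1}$-invariant variety of powers of a quadric, forces the offending component to vanish. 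Descending two $X$-degrees at a time and alternately applying Nenashev and Lemma \ref{lem:disjoint-variable-skewness}, one expresses each $X$-homogeneous piece of $h_i, p_i$ as a forced multiple of the corresponding $Y$-homogeneous piece, until the identity collapses to a pure-$Y$ relation involving only $\ell_i^{d-1}$ and $\ell_i^{d-2}$; Alexander-Hirschowitz (Theorem \ref{thm:alexander-hirschowitz}) closes the argument on the $Y$-side. Balancing $n_1 \approx n$ with $n_2 \approx n^{(d-2)/(d-1)}$ matches the parameter-counting threshold asymptotically. The second main obstacle is this bidegree descent itself: with $\Theta(d)$ iterations, one must verify that the explicit duonomial coefficients of $s_{d-1}$ and $s_{d-2}$ never conspire to annihilate the Nenashev input at an intermediate step. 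This should reduce to a combinatorial identity among duonomials, but must be checked uniformly in $d$.
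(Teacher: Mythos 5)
The statement you are trying to prove is \Cref{conj:deg5-ident}, which the paper explicitly leaves as a \emph{conjecture}: it offers no proof, only numerical evidence for $n\le 19$ and $d\in\{5,6\}$ (\Cref{table:gm-ident-deg-5-6}). So there is no ``paper proof'' to match; what you have written is a research program, and you are candid that it has two unresolved obstacles (the $n=2$ base case for tangential weak defectivity in arbitrary degree, and the coefficient bookkeeping in the bidegree descent). That candor is appropriate, but as it stands this is not a proof. Your Hermite-polynomial observation is a genuine improvement worth keeping: $s_d(L,1)=(-i)^d\mathrm{He}_d(iL)$ is correct, separability of $\mathrm{He}_d$ does replace the Eisenstein argument uniformly in $d$, and together with $\partial_L s_{d-1}=(d-1)s_{d-2}$ it gives \Cref{lem:sd-no-common-quadfactors} for all degrees.

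Beyond the gaps you flag, there are two further problems. First, your claim that balancing $n_1\approx n$, $n_2\approx n^{(d-2)/(d-1)}$ ``matches the parameter-counting threshold asymptotically'' fails for odd $d$. The paper states this explicitly in \Cref{sec:deg-odd-ident}: for $d=7$ the variable-splitting argument loses a multiplicative factor of $n$ and yields only $m=\Theta(n^4)$ against the parameter-counting bound $\Theta(n^5)$. The reason is visible in your own descent: for odd $d$ the second step requires directness of $(q_1^{(d-1)/2},\ldots,q_m^{(d-1)/2})$ in degree $d$, i.e.\ Nenashev with $h=2$ rather than $h=1$, which caps $m$ at $\Theta(n_1^{d-3})\cdot\Theta(n_1)=\Theta(n_1^{d-2})$ only in the even case; in the odd case the binding constraint drops an order of magnitude. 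Second, the conjecture is an \emph{exact} statement, demanding $m$-identifiability for every $n\ge 2$ and every $m$ strictly below $\binom{n+d-1}{d}/\binom{n+1}{2}-1$, whereas the Nenashev/Alexander--Hirschowitz machinery inherently produces asymptotic bounds with lower-order losses --- even the paper's own \Cref{thm:gmm-identifiable-deg6} for $d=6$ reaches only $\binom{n+5}{6}/\dim\GM_6(\C^n)-\mathcal{O}(n^{3.00001})$, which does not cover the conjectured range. So your plan, even if both of your stated obstacles were overcome, would at best prove a weakened asymptotic version of the conjecture for even $d$, not the conjecture itself.
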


The numerical experiments were conducted by sampling random linear forms and covariance matrices $ (\ell_{1},q_1),\ldots,(\ell_{m},q_m) $, and then calculating the dimension of the sum of tangent spaces $ \sum_{i = 1}^{m} T_{\ell_{i}, q_i} \GM_{d}(\C^n) $, which is the third respective column in \Cref{table:gm-ident-deg-5-6}. The column labeled ``expected dimension'' is the dimension of the direct sum of tangent spaces. We always chose the rank $ m $ in accordance with the bound from counting parameters.

\begin{table}[!h]
	\centering
	\begin{minipage}[t]{0.44\linewidth}
		\hspace{0.5em} \textbf{$ \GM_{5}(\C^n) $-secants}\hspace{0.5em} 
		\csvreader[tabular=rrrr,
		table head=  n & rank & secant dim. & exp. dim. \\ \hline, late after line=\\]
		{data/secant-dimensions-deg5.csv}{"n"=\n,"secant dimension"=\sdim,"expected dimension"=\edim, "rank"=\rank}
		{\n & \rank & \sdim & \edim }
	\end{minipage} \vline
	\begin{minipage}[t]{0.46\linewidth}
		\hspace{0.5em}\textbf{$ \GM_{6}(\C^n) $-secants}
		\csvreader[tabular=rrrr,
		table head=  n & rank & secant dim. & exp. dim. \\ \hline, late after line=\\]
		{data/secant-dimensions-deg6.csv}{"n"=\n,"secant dimension"=\sdim,"expected dimension"=\edim, "rank"=\rank}
		{\n & \rank & \sdim & \edim }
	\end{minipage}
	\caption[Identifiability for $ \GM_{5}(U) $]{Numerical experiments show that for $ n=2,\ldots, 19 $, the Gaussian moment variety is nondefective up to the maximum possible rank $ m = \lfloor \dim \C[X]_{d}/\dim \GM_{d}(\C^n) \rfloor $ for both $ d = 5 $ (left) and $ d = 6 $ (right). The value for $ (d, n)=(6, 20) $ is missing due to memory limitations. }
	\label{table:gm-ident-deg-5-6}
\end{table}

We formalize the numerical results from \Cref{table:gm-ident-deg-5-6} in the following theorem. 
\begin{theorem}
	For any $ n \in \{1,\ldots,19\} $ and $ d\in \{5,6\} $, if $ m $ is strictly smaller than the value in the ``rank'' column of \Cref{table:gm-ident-deg-5-6}, then $ \GM_{d}(\C^n) $ is $ m $-identifiable. In addition, 
	\begin{enumerate}
		\item for all $ n\ge 20 $, $ \GM_{5}(\C^n) $ is $ m $-identifiable for all $ m\leq 183 $.
		\item for all $ n\ge 19 $, $ \GM_{6}(\C^n) $ is $ m $-identifiable for all $ m\leq 643 $.
	\end{enumerate}
\end{theorem}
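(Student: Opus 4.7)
The plan is to reduce the statement for every $n$ to the finite-dimensional rank computations tabulated in Table~\ref{table:gm-ident-deg-5-6}, via an algebraic argument that transfers $(m+1)$-nondefectivity from a fixed $n_0$ to all $n \ge n_0$. Whenever $(m+1)$-nondefectivity of $\GM_d(\C^n)$ is established, $m$-identifiability follows from \Cref{thm:massarenti-mella}: condition~(2) is supplied by \Cref{prop:gm6-not-1-tangentially-weakly-defective} (which covers $d \in \{5,6\}$), and condition~(1) reduces precisely to the parameter-counting bound underlying the table. For the cases listed in the table one simply runs the rank check: sample $m+1$ random pairs $(\ell_i, q_i) \in \C^n \times \Sym^2 \C^n$ and verify that the concatenated tangent Jacobian has rank $(m+1)(n + \binom{n+1}{2})$.

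Let $n_0$ denote the largest $n$ in the table for the specified $m$, i.e.\ $n_0 = 20$ with $m+1 = 184$ for $d=5$ and $n_0 = 19$ with $m+1 = 644$ for $d=6$. The main task is to transfer $(m+1)$-nondefectivity of $\GM_d(\C^{n_0})$ to all $n > n_0$. To this end, fix $m+1$ generic $(\ell_i, q_i) \in \C^{n_0}$ witnessing nondefectivity at $n_0$, view them as points of $\C^n$ via the embedding $\C^{n_0} \hookrightarrow \C^n$ on the first $n_0$ coordinates, and show that $\sum_i T_{s_d(\ell_i,q_i)}\GM_d(\C^n)$ already attains the expected dimension $(m+1)(n + \binom{n+1}{2})$. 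Lower semi-continuity of dimension for this family of sums then propagates the equality to generic $(m+1)$-tuples in $\C^n$.

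The heart of the transfer step is a multigraded decomposition: each tangent space $T_i := T_{s_d(\ell_i, q_i)}\GM_d(\C^n)$ splits by degree in the ``new'' variables $X_{n_0+1},\ldots,X_n$ as $T_i = A_i \oplus B_i^{(1)} \oplus B_i^{(2)}$, where $A_i$ is the part of new-degree~$0$ and coincides with $T_{s_d(\ell_i,q_i)}\GM_d(\C^{n_0})$, $B_i^{(2)} = s_{d-2}(\ell_i,q_i)\cdot\vspan\{X_jX_k : j,k>n_0\}$, and $B_i^{(1)}$ decomposes further as $\bigoplus_{j>n_0} X_j\cdot U_i$ with $U_i := s_{d-1}(\ell_i,q_i)\C + s_{d-2}(\ell_i,q_i)\C[X_1,\ldots,X_{n_0}]_1$. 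The three sums $\sum_i A_i$, $\sum_i B_i^{(1)}$, $\sum_i B_i^{(2)}$ sit in disjoint new-multidegree classes and are automatically in direct sum with each other, so it suffices to check each is internally direct of the expected dimension. Directness of $\sum_i A_i$ is the base hypothesis. For the other two, observe that multiplication by $X_a X_b$ (with $a,b \le n_0$) sends $B_i^{(2)}$ into $A_i$, and multiplication by a single $X_a$ sends $U_i$ into $A_i$. Any putative linear relation inside $\sum_i B_i^{(2)}$ or inside $\sum_i U_i$, multiplied by such a monomial, lands inside $\sum_i A_i$, where it must be trivial; since $\C[X_1,\ldots,X_{n_0}]$ is an integral domain, the original relation is already trivial.

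The step I expect to be the main obstacle is precisely this transfer: a priori the added degrees of freedom in $\GM_d(\C^n)$ beyond those of $\GM_d(\C^{n_0})$ could produce nontrivial collapses in the sum, and the reason they do not is entirely contained in the domain structure of $\C[X_1,\ldots,X_{n_0}]$ combined with the very explicit bihomogeneous form of $s_{d-1}$ and $s_{d-2}$ that makes $X_a\cdot U_i \subseteq A_i$ work. Once the transfer is in place, \Cref{thm:massarenti-mella} closes both items: condition~(1) reads $(m+1)\dim\GM_d(\C^n) \le \binom{n+d-1}{d}$, which holds as a weak inequality for all $n \ge n_0$ (with equality only at the boundary case $(n,d,m+1) = (19,6,644)$, where the theorem still applies).
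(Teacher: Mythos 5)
Your proposal is correct, and its skeleton matches the paper's: settle the tabulated cases by the numerical rank computations, extract identifiability via \Cref{thm:massarenti-mella} together with \Cref{prop:gm6-not-1-tangentially-weakly-defective}, and then transfer from the largest tabulated $n_0$ to all $n\ge n_0$. Where you genuinely diverge is the transfer step. The paper handles it in a few lines by restricting along a general linear projection $\C^n\to\C^{n_0}$: a failure of directness of the tangent spaces (or of uniqueness of a decomposition) upstairs would survive a general restriction and contradict the verified statement downstairs; this has the added benefit of transferring $m$-identifiability itself, so Massarenti--Mella need only be applied once. You instead go up via the coordinate embedding $\C^{n_0}\hookrightarrow\C^n$, grade each tangent space by degree in the new variables, and reduce directness of the pieces $A_i$, $B_i^{(1)}$, $B_i^{(2)}$ to the base case plus an integral-domain argument --- essentially the variable-splitting device of \Cref{lem:gm-deg6-skewtangents} redeployed. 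Your route is more explicit but must re-invoke Massarenti--Mella at every $n$ (you correctly note condition (1) holds with equality at $(n,d,m)=(19,6,643)$), and it elides one small point: to get the full expected dimension of each $U_i$ and $B_i^{(2)}$, not merely directness, you also need injectivity of the parametrization of $A_i$, which is part of the base nondefectivity hypothesis and is recovered by one further multiplication by an old variable. Neither point is a gap.
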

\begin{proof}
	First, for $ n\in \{1,\ldots,19\} $, the claims follow from \Cref{table:gm-ident-deg-5-6} together with \Cref{thm:massarenti-mella} and \Cref{prop:gm6-not-1-tangentially-weakly-defective}. Note that we require $ m $ to be \emph{strictly} smaller than the parameter counting bound  $ \dim \C[X]_{d}/\dim \GM_{d}(\C^n) $, in order to account for the conditions in \Cref{thm:massarenti-mella}. Also, note that if an irreducible, nondegenerate affine cone $ V $ is $ m $-identifiable, then it is also $ (m-1) $-identifiable, see \cite[Lemma 2.2.32]{Taveira_Blomenhofer_Thesis}. Thus, identifiability holds for all values $ m $ which are smaller than the value in the ``rank'' column of \Cref{table:gm-ident-deg-5-6}. (NB: Obviously, $ m $-nondefectivity also implies $ (m-1) $-nondefectivity). 
	
	Last, if $ \GM_{d}(\C^{n_0}) $ is $ m $-identifiable (or $ m $-nondefective) for some $ n_0\in \N $, then  $ \GM_{d}(\C^{n}) $ is $ m $-identifiable (or $ m $-nondefective, respectively), for all $ n\ge n_0 $: Indeed, consider a general linear projection $ \pi\colon \C^n \to \C^{n_0}$ and the induced map $ \Pi\colon \C[X] \to \C[X_1,\ldots,X_{n_0}] $. Observe that if a sum of $ m $ tangent spaces to $ \GM_{d}(\C^n)$ was not direct, then their projection to tangents of $ \GM_{d}(\C^{n_0}) $ would not be direct, either. Similarly, if a general element $ t \in \sigma_m \GM_{d}(\C^n) $ had two different decompositions as $ m $-fold sums, then $ \Pi(t) $ would have, too. 
\end{proof}

\begin{figure}[h]
	\centering
	\def\svgwidth{0.7\columnwidth}
	\includesvg{secant-dimensions.svg}
	\caption{The blue, red, green points correspond to the dimensions of the secant variety $ \sigma_m \GM_d(\C^n) $ for $ d = 4,5,6 $, respectively, and several small values of $ n $. We always choose $ m = \lfloor \frac{\dim \C[X]_{d}}{\binom{n+1}{2} + n} \rfloor $. This is the bound for identifiability obtained from counting parameters. The dashed lines correspond to the expected dimensions. }
	\label{fig:secant-dimensions}
\end{figure}

\subsection{Non-identifiability from degree-$ 4 $ moments}\label{sec:deg4-nonident}

Moments of degree at most $ 4 $ never suffice to recover the parameters of a mixture of at least $ 2 $ Gaussians. In fact, a mixture of two general Gaussians will have infinitely many Gaussian mixture representations of rank $ 2 $ in degree $ 4 $. To see this, look at the tangent space 
\begin{align}
	T_{\ell_{1}, q_1}\GM_{4}(\C^n) + T_{\ell_{2}, q_2}\GM_{4}(\C^n), 
\end{align}
which consists of elements of the form
\begin{align}
	(\ell_1^3 + q_1\ell_1) h_1 + (\ell_1^2 + q_1) p_1 + (\ell_2^3 + q_2\ell_2) h_2 + (\ell_2^2 + q_2) p_2, 
\end{align}
where $ h_1, h_2 $ are linear forms and $ p_1, p_2 $ are quadratic forms. The subspaces spanned by the $ p_1 $- and the $ p_2 $-expressions have a nonempty intersection: Indeed, note that for $ p_1 = (\ell_2^2 + q_2) $ and $ p_2 = -(\ell_1^2 + q_1) $, it holds that 
\begin{align}\label{eq:koszul-syzygy}
	(\ell_1^2 + q_1) p_1 + (\ell_2^2 + q_2) p_2 = 0.
\end{align}
Therefore, $ \sigma_2 \GM_{4}(\C^n) $ is not of expected dimension. The alert reader might have noticed that \eqref{eq:koszul-syzygy} is a Koszul syzygy of the ``shifted'' covariance matrices $ \ell_{i}^2 + q_i $ (which are the second-order moment forms). Thus, the defect of the $ m $-th secant of $ \GM_{4}(\C^n) $ is at least $ \binom{m}{2} $.

Note that the dashed lines in \Cref{fig:secant-dimensions} correspond to the expected dimensions, which, for degree $ 5 $ and $ 6 $, agree with the actual dimensions (shown as points in \Cref{fig:secant-dimensions}). However, for degree $ 4 $, this is not the case. Here, the blue points align with the blue non-dashed line, which plots the expected dimension minus $ \binom{m}{2} $. 
Thus, our numerical experiments suggest the following conjecture. 

\begin{conjecture}\label{conj:gm4}
	For $ m, n\in \N_{\ge 2} $, the (tangential) defect of $ \sigma_m \GM_{4}(\C^n) $ is precisely $ \binom{m}{2} $, unless $ \sigma_m \GM_{4}(\C^n) $ fills the entire space $ \C[X]_{4} $. In other words, the defect of the $ m $-th secant of $ \GM_{4}(\C^n) $ is completely explained by the Koszul syzygies of the second-order moments. 
\end{conjecture}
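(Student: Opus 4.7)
The conjecture asserts that in the sub-generic regime, the unique defect of $\sigma_m \GM_4(\C^n)$ arises from Koszul syzygies $f_i f_j - f_j f_i = 0$ among the second-moment forms $f_k := s_2(\ell_k, q_k) = \ell_k^2 + q_k$. The plan is to match a lower and upper bound on the dimension of the kernel of the tangent-space parametrization
$$\phi\colon \bigoplus_{i=1}^m \bigl(\C[X]_1 \oplus \C[X]_2\bigr) \to \C[X]_4, \qquad (h_i, p_i)_i \mapsto \sum_i \bigl(s_3(\ell_i, q_i)\, h_i + f_i\, p_i\bigr).$$
A useful preliminary reformulation is the identity $s_3(\ell, q) = 3\ell\, s_2(\ell, q) - 2\ell^3$, which rewrites the image of $\phi$ as $\sum_i \bigl(\ell_i^3 \C[X]_1 + f_i \C[X]_2\bigr)$, cleanly separating a Veronese contribution from a quadric contribution. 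For the lower bound, each pair $i<j$ produces a Koszul element $(p_i, p_j) = (f_j, -f_i)$ of $\ker\phi$, and linear independence of these $\binom{m}{2}$ elements reduces to linear independence of $f_1,\ldots,f_m$, which holds for generic parameters throughout the sub-generic range.

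For the upper bound, by upper semicontinuity of kernel dimension, it suffices to exhibit a specialized configuration with $\dim \ker\phi \leq \binom{m}{2}$. Following the degeneration template of \Cref{lem:gm-deg6-skewtangents}, split the variables as $(X,Y)$ with $n_1 + n_2 = n$ and choose $\ell_i \in \C[Y]_1$ and $q_i \in \C[X]_2$ generic. A hypothetical syzygy $\sum_i (\ell_i^3 h_i + f_i p_i) = 0$ decomposes along the $X$-bidegree $k \in \{0,1,2,3,4\}$ into five simpler identities. The top ($k=4$) piece $\sum_i q_i p_{i,X} = 0$ is governed by the first-syzygy module of $q_1,\ldots, q_m$: for $m \leq n_1$ these form a regular sequence, so the Koszul complex gives $p_{i,X} = \sum_{j \neq i} c_{ij} q_j$ with antisymmetric coefficients $c_{ij}$. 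The $(3,1)$-piece forces $p_{i,XY} = 0$, since a regular sequence of quadrics has no degree-$1$ first syzygies. The $(2,2)$-piece, after substituting the above, reads $\sum_j q_j \bigl(p_{j,Y} + \sum_{i \neq j} c_{ij} \ell_i^2\bigr) = 0$; the bracket lies in $\C[Y]_2$, and linear independence of the $q_j$ over $\C[Y]$ forces $p_{j,Y} = -\sum_{i \neq j} c_{ij} \ell_i^2$ (equivalently, $p_{i,Y} = \sum_{j \neq i} c_{ij}\ell_j^2$ after using antisymmetry). The bidegree-$(1,3)$ piece reduces to $\sum_i \ell_i^3 h_{i,X} = 0$, and the bidegree-$(0,4)$ piece to $\sum_i \ell_i^3 h_{i,Y} = 0$, the latter via the antisymmetric collapse $\sum_{i \neq j} c_{ij} \ell_i^2 \ell_j^2 = 0$. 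Alexander-Hirschowitz (\Cref{thm:alexander-hirschowitz}) then forces $h_i = 0$ provided $mn_2 \leq \binom{n_2+3}{4}$ and $m \leq \binom{n_2+2}{3}$ and the triple $(n_2, 4, m)$ avoids the exceptional list. Consequently $\dim \ker \phi = \binom{m}{2}$ in this specialization, completing the upper bound within the allowed range of $m$.

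The principal obstacle is that the splitting argument only covers $m$ up to roughly the regular-sequence length $n_1$, whereas the sub-generic regime of the conjecture extends to $m = \Theta(n^2)$. For $m > n_1$ the quadrics $q_1,\ldots, q_m$ cease to form a regular sequence, and additional syzygies in degree $4$ beyond the $\binom{m}{2}$ Koszul ones may appear. Fully closing the gap would require either Fröberg's conjecture for the Hilbert series of generic quadrics in the relevant degree (still open in general) or a finer degeneration that uses the extra tangent-space freedom of the $\ell_i^3 \C[X]_1$ terms to absorb non-Koszul syzygies. A concrete refinement worth pursuing is to specialize some $q_i$ to a carefully designed non-regular sequence (for instance, powers of fewer linear forms) and to show that the syzygies gained are exactly cancelled by the $\ell_i^3$-freedom; this appears to be the heart of the remaining combinatorial puzzle and the reason the statement remains only a conjecture.
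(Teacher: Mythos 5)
You should first be aware that the statement you were asked to prove is stated as a \emph{conjecture} in the paper: no proof is given. What the paper establishes in \Cref{sec:deg4-nonident} is only the lower bound, namely that the Koszul syzygies of the second-order moment forms $ f_i = \ell_i^2 + q_i $ force a defect of at least $ \binom{m}{2} $; the matching upper bound rests solely on the numerical evidence of \Cref{fig:secant-dimensions}, and the paper explicitly names the sums-of-squares case of Ottaviani's conjecture as the expected stepping stone. Your lower bound is exactly the paper's argument. Your upper bound attempt goes beyond the paper and, as far as I can check, is correct in the range where it applies: the rewriting $ s_3(\ell, q) = 3\ell f - 2\ell^3 $, the bidegree decomposition after splitting variables, the identification of the bidegree-$(4,0)$ syzygies with the Koszul module of a regular sequence of quadrics, and the propagation through the remaining bidegrees via \Cref{lem:disjoint-variable-skewness} and \Cref{thm:alexander-hirschowitz} all go through (modulo avoiding the $ d=4 $ exceptions of Alexander--Hirschowitz, which is easy).

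The genuine gap is the one you name yourself, and it is fatal for the statement as conjectured: the regular-sequence hypothesis confines you to $ m \le n_1 < n $, whereas the conjecture lives in the range $ m = \Theta(n^2) $. Beyond $ m = n_1 $ it is no longer known that the degree-$4$ first syzygies of generic quadrics are spanned by the Koszul relations --- this is an open instance of the Fr\"oberg/Ottaviani circle of problems, and Nenashev's theorem controls only the Hilbert function of $ (q_1,\ldots,q_m) $, not its syzygies, so it cannot substitute. Worse, once $ m $ is large enough that $ (q_1,\ldots,q_m)_4 = \C[X_1,\ldots,X_{n_1}]_4 $, the bidegree-$(4,0)$ syzygy space has dimension $ m\binom{n_1+1}{2} - \binom{n_1+3}{4} \gg \binom{m}{2} $, so the specialized configuration itself acquires excess kernel in that bidegree which would have to be shown not to extend to a full syzygy of $ \phi $ --- semicontinuity then no longer delivers the desired upper bound from this degeneration. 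In short, your proposal is a correct proof of the conjectured equality for $ m = \mathcal{O}(n) $, a range in which the secants are very far from filling $ \C[X]_4 $, but it does not and cannot, with these tools alone, establish \Cref{conj:gm4} in its stated generality; the missing ingredient is precisely the syzygy statement the paper identifies as open.
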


It is instructive to compare \Cref{conj:gm4} with Ottaviani's conjecture on powers-of-forms, as described in \cite[Conjecture 1.2]{Lundqvist_Oneto_Reznick_Shapiro_2019}, or, independently, also in \cite[Conjecture 1]{Hieu_Sorber_VanBarel_2013}: For sums of squares, Ottaviani's conjecture states that the defect of the $ m $-th secant variety of squares of $ k $-forms is precisely $ \binom{m}{2} $. We believe that the first stepping stone towards resolving \Cref{conj:gm4} should be to work on this particular case of Ottaviani's conjecture.

\section{Generic ranks of Gaussian moment varieties}

At last, we would like to address another classical question for the setting of Gaussian moments: Given a generic $ d $-form $ f $, how many generalized Gaussian moments (i.e., elements of $ \GM_{d}(\C^n) $) do we need to represent $ f $ as their sum? An analogous question was classically studied for Waring decompositions and resulted in the celebrated Alexander-Hirschowitz Theorem, see \cite{hirschowitz1995polynomial}. 

\begin{defi}
	Let $ V $ an irreducible, nondegenerate affine cone in some finite-dimensional $ \C $-vector space $ U $. Then, the smallest integer 
	$ m \in \N_0 $ such that $ \sigma_m V = U $ is called the \emph{generic rank} with respect to $ V $. 
\end{defi}

\begin{prop}
	Let $ n, d\in \N $ with $ d\ge 5 $ and denote by $ m^{\circ} $ the generic rank with respect to $ \GM_{d}(\C^n) $. Then, 
	\begin{align}
		\left\lceil\frac{\binom{n+d-1}{d}}{\binom{n+1}{2}+n}\right\rceil \le m^{\circ} \le \left\lceil \frac{\binom{n+d-1}{d}}{\binom{n+1}{2}} + \binom{n+1}{2} \right\rceil
	\end{align}
\end{prop}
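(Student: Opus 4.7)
The plan is to obtain the two bounds from independent sources: the lower bound from a dimension count, and the upper bound from Nenashev's theorem applied to the auxiliary variety $\GM_{d-2}(\C^n)$. No delicate degeneration argument is needed.

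For the lower bound, I would invoke the standard inequality $\dim \sigma_m V \le m \dim V$ valid for any irreducible variety $V$. Applied to $V = \GM_{d}(\C^n)$, whose dimension is $\binom{n+1}{2} + n$ for $d \ge 4$ by \Cref{def:gm-variety}, this forces any $m$ with $\sigma_m \GM_d(\C^n) = \C[X]_d$ to satisfy $m (\binom{n+1}{2}+n) \ge \binom{n+d-1}{d}$, and the ceiling version follows since $m$ is an integer.

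For the upper bound, the key observation is that by \Cref{prop:gmm-parametrization-tangent}, the tangent space at a smooth point $s_d(\ell,q) \in \GM_d(\C^n)$ contains the subspace $s_{d-2}(\ell,q) \cdot \C[X]_2$. By Terracini's \Cref{lem:terracini}, at a general point of $\sigma_m \GM_d(\C^n)$ the tangent space equals $\sum_{i=1}^m T_{s_d(\ell_i,q_i)}\GM_d(\C^n)$, hence contains $\sum_{i=1}^m s_{d-2}(\ell_i,q_i) \cdot \C[X]_2$, which is exactly the degree-$d$ component of the ideal $I = (s_{d-2}(\ell_1,q_1), \ldots, s_{d-2}(\ell_m,q_m))$. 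Since $\GM_{d-2}(\C^n)$ is closed under the $\GL_n(\C)$-action on forms, and general $(\ell_i,q_i)$ produce general elements of $\GM_{d-2}(\C^n)$, I can apply Nenashev's \Cref{thm:nenashev} with $a = d-2$ and $h = 2$, so that $\dim S^{a+h}(\C^n) = \binom{n+d-1}{d}$ and $\dim S^h(\C^n) = \binom{n+1}{2}$. The theorem's second branch then yields $I_d = S^d(\C^n) = \C[X]_d$ whenever
\begin{align}
    m \ge \frac{\binom{n+d-1}{d}}{\binom{n+1}{2}} + \binom{n+1}{2}.
\end{align}
Combining with the tangent space inclusion, the tangent space of $\sigma_m \GM_d(\C^n)$ at a general point fills $\C[X]_d$, so $\sigma_m \GM_d(\C^n) = \C[X]_d$ and $m^\circ \le m$. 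Rounding to the smallest admissible integer gives the stated bound.

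I do not expect any serious obstacle here. The only minor points to check are that the parametrization $(\ell,q)\mapsto s_{d-2}(\ell,q)$ sends a general pair to a general element of $\GM_{d-2}(\C^n)$ (true because the map is dominant onto its image) and that the hypothesis $d \ge 5$ is enough to ensure $\GM_{d-2}(\C^n)$ is a well-defined $\GL_n$-invariant subvariety of $\C[X]_{d-2}$ to which Nenashev's result applies.
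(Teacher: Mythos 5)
Your proposal is correct and follows essentially the same route as the paper: the lower bound from the dimension count on the summation map $\GM_d(\C^n)^m \to \C[X]_d$, and the upper bound by discarding the $h_i$-part of the Terracini tangent space and applying the second branch of Nenashev's theorem to the $\GL_n(\C)$-invariant variety $\im s_{d-2}$ with $a=d-2$, $h=2$. The only difference is cosmetic: you spell out the genericity transfer from $(\ell_i,q_i)$ to elements of $\GM_{d-2}(\C^n)$, which the paper leaves implicit.
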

\begin{proof}
	The lower bound stems from counting parameters: Clearly, the summation map $ \GM_{d}(\C^n)^{m} \to \C[X]_{d} $ can only be dominant, if the dimension of the domain is at least the dimension of the codomain. 
	The upper bound is due to the following observation: The dimension of the secant variety $ \sigma_m \GM_{d}(\C^n) $ equals the dimension of its tangent space at a general point. Let $ (\ell_1,q_1),\ldots,(\ell_m, q_m) $ pairs of general linear and quadratic forms on $ \C^n $.  The tangent space to the secant variety at $ f = s_{d}(\ell_1,q_1) + \ldots + s_{d}(\ell_m, q_m) $ equals the set of all expressions
	\begin{align}
		s_{d-1}(\ell_i, q_i)h_i + s_{d-2}(\ell_i, q_i)p_i, \qquad (h_i\in \C[X]_{1}, p_i\in \C[X]_{2})
	\end{align}
	A lower bound for the dimension of this space can therefore be obtained by setting all $ h_i $ to zero. Then, the set 
	\begin{align}
		\{s_{d-2}(\ell_i, q_i)p_i \mid p_i\in \C[X]_{2} \} = (s_{d-2}(\ell_{1},q_1),\ldots,s_{d-2}(\ell_{m},q_m))_{d}
	\end{align}
	is a graded component of the ideal generated by the forms $ s_{d-2}(\ell_i, q_i) $. We can now apply Nenashev's result, but this time, we use the second case of \Cref{thm:nenashev}, applied to the variety $ \mathcal{D} := \im s_{d-2} $, which is closed under the action of $ \GL(\C^n) $. We obtain from \eqref{eq:nenashev-2} in \Cref{thm:nenashev}, that for 
	\begin{align}
		m\ge \frac{\binom{n+d-1}{d}}{\binom{n+1}{2}} + \binom{n+1}{2},
	\end{align} 
	the ideal spanned by the $ s_{d-2}(\ell_i, q_i) $ is the entire space $ \C[X]_{d} $. 
\end{proof}

\section{Conclusions}

\subsection{Outlook: Even degree identifiability}\label{sec:deg-even-ident}

It is possible to verify that the proof of \Cref{lem:gm-deg6-skewtangents} generalizes to several higher even degrees $ d\ge 6 $. While the structure of the proof remains the same, the occurring coefficients change. It would therefore be good to know whether \Cref{thm:main-result} generalizes to \emph{all} even degrees $ d\ge 6 $ simultaneously. This would take some significant combinatorial effort. Similarly, in order to generalize \Cref{prop:gm6-not-1-tangentially-weakly-defective} to higher degrees, one would need to replace the computer argument. We leave this to future work. 

\subsection{Outlook: Odd degree identifiability}\label{sec:deg-odd-ident}

We also expect the moment varieties of odd degree to be identifiable. A proof that a sum of $ m=\Theta(n^4) $ general tangents to $ \GM_{7}(\C^n) $ is skew can be found in my doctoral thesis \cite[Theorem 3.3.8]{Taveira_Blomenhofer_Thesis}. Unfortunately, for odd degrees, the variable splitting argument loses a multiplicative factor of $ n $, and therefore does not match the upper bound from counting parameters, which would be $ m=\Theta(n^5) $, for degree $ 7 $. 

\begin{cor}
	$ \GM_{7}(\C^n) $ is $ m $-identifiable for some $ m=\Theta(n^4) $. 
\end{cor}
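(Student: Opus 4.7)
The plan is to invoke the Massarenti–Mella identifiability criterion (Theorem~\ref{thm:massarenti-mella}) with $V = \GM_7(\C^n)$, following the same three-step verification as in the proof of Theorem~\ref{thm:gmm-identifiable-deg6}. The dimension inequality $mn \le N - n$ is automatic for any $m = \Theta(n^4)$, since $N = \dim \C[X]_7 = \binom{n+6}{7} = \Theta(n^7)$ and $\dim \GM_7(\C^n) = \binom{n+1}{2}+n = \Theta(n^2)$. Non–$1$-tangential weak defectivity of $\GM_7(\C^n)$ is supplied directly by Proposition~\ref{prop:gm6-not-1-tangentially-weakly-defective}, which was established uniformly for all $d \in \{5,\ldots,8\}$ and in particular covers $d = 7$.

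The only non-routine input is the $(m+1)$-nondefectivity hypothesis, i.e., skewness of $m+1$ general tangent spaces to $\GM_7(\C^n)$ for some $m = \Theta(n^4)$. This is the content of Theorem~3.3.8 of the author's doctoral thesis, whose proof proceeds in parallel to Lemma~\ref{lem:gm-deg6-skewtangents}: write the $n$ variables as $(X, Y)$ with $n_1 + n_2 = n$, take the quadratic forms $q_i$ generic in $X$ and the linear forms $\ell_i$ generic in $Y$, and peel off the tangent identity
\begin{align*}
0 \;=\; \sum_{i=1}^m s_6(\ell_i, q_i)\, h_i \;+\; \sum_{i=1}^m s_5(\ell_i, q_i)\, p_i
\end{align*}
bidegree-by-bidegree in $(X, Y)$. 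Nenashev's Theorem~\ref{thm:nenashev} controls the $X$-side via the ideals generated by suitable powers of the $q_i$ (here cubes, coming from the $q^3$ term of $s_6$), the Alexander–Hirschowitz Theorem~\ref{thm:alexander-hirschowitz} controls the $Y$-side via powers of the $\ell_i$, and Lemma~\ref{lem:disjoint-variable-skewness} is iterated to propagate the vanishing across the intermediate bidegrees.

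The main obstacle one encounters—and the source of the gap with the parameter-counting optimum $\Theta(n^5)$ noted in Section~\ref{sec:deg-odd-ident}—is that the top bidegree piece of $s_5(\ell,q) = \ell(\ell^4+10q\ell^2+15q^2)$ carries an obligatory factor of $\ell$. This asymmetry, peculiar to odd $d$, forces the Nenashev-type constraint on the $X$-side to be the binding one after optimization, so that even the balanced variable split reaches only $m = \Theta(n^4)$. Once skewness is secured in that range, feeding the three verified hypotheses into Theorem~\ref{thm:massarenti-mella} delivers the claimed $m$-identifiability of $\GM_7(\C^n)$.
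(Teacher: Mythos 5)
Your proof is correct and follows essentially the same route as the paper: both verify the three hypotheses of Theorem~\ref{thm:massarenti-mella} by combining Proposition~\ref{prop:gm6-not-1-tangentially-weakly-defective} (which covers $d=7$) with the tangent-space skewness result from the author's thesis, the dimension condition being automatic. Your added sketch of the variable-splitting argument and your diagnosis of why the odd-degree case only reaches $\Theta(n^4)$ are consistent with the paper's discussion in Section~\ref{sec:deg-odd-ident}.
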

\begin{proof}
	Combining \cite[Theorem 3.3.8]{Taveira_Blomenhofer_Thesis} with \Cref{prop:gm6-not-1-tangentially-weakly-defective} shows that the assumptions of \Cref{thm:massarenti-mella} are satisfied. 
\end{proof}

\subsection{Outlook: Exponential varieties}
\raggedbottom
The Veronese varieties, powers-of-forms varieties (as considered in \cite[Section 4]{Blomenhofer_Casarotti_Michalek_Oneto_2022}) and Gaussian moment varieties all fall into a similar category of varieties, which we aim to capture with the following definition. 

\begin{defi}
	Let $ U $ a finite-dimensional subspace of the ideal $ (X_1,\ldots,X_n) $. Then for $ d\in \N $, we call the closure of the image of 
	\begin{align}
		s_{d}(U)\colon U\to \C[X]_{d}, f\mapsto \exp(f)_{=d},
	\end{align}
	the degree-$ d $ \emph{exponential variety} $ \mathcal{E}_{d}(U) $ of $ U $. Here, $ g_{=d} $ denotes the degree-$ d $ homogeneous part of some power series $ g $ in $ X $. 
\end{defi}
The Veronese varieties are recovered, if one takes $ U $ as the space of linear forms. Powers-of-forms varieties are recovered, if one takes $ U $ as the space of $ k $-forms, for some $ k\in \N $. The Gaussian moment variety is recovered by this notion, if one takes  $ U = \C[X]_{1} \oplus \C[X]_{2} $. 
It is natural to conjecture that many other exponential varieties will eventually have identifiable secants, if $ d $ is large enough. E.g. for $ U = \C[X]_{1} \oplus \ldots \oplus \C[X]_{k} $, one expects a similar variable-splitting argument to work for large $ d $, unless something goes wrong with the combinatorics of the coefficients of $ s_{d}(U) $. Identifiability of exponential varieties could therefore be an interesting direction of future study, and we hope that the present work gave some convincing motivation for it.

\appendix

\bibliography{bibML}
\bibliographystyle{acm}
\end{document}